\newcommand{\Z}{{\mathbb Z}}
\newcommand{\N}{{\mathbb N}}
 \newcommand{\G}{\mathcal{G}}
  \newcommand{\D}{\mathcal{D}}
\newcommand{\R}{{\mathbb R}}
\renewcommand{\phi}{{\varphi}}
\newcommand{\Per}{\mathrm{Per}}
\newtheorem{theorem}{Theorem}[section]
\newtheorem{lemma}[theorem]{Lemma}
\newtheorem{proposition}[theorem]{Proposition}
\newtheorem*{example}{Example}
\theoremstyle{definition}
\newtheorem{definition}[theorem]{Definition}
\newtheorem{remark}[theorem]{Remark}
\newcommand{\eps}{\varepsilon}
\newcommand{\EEE}{\color{black}}
\begin{document}
 
\title{Lattice tilings minimizing nonlocal perimeters}

\author{Annalisa Cesaroni}
\address{Dipartimento di Matematica "Tullio Levi-Civita", Universit\`{a} di Padova, Via Trieste 63, 35131 Padova, Italy}
\email{annalisa.cesaroni@unipd.it}
\author{Ilaria Fragal\`a}
\address{Dipartimento di Matematica, Politecnico di Milano, Piazza Leonardo da Vinci 32, 20133 Milano, Italy}
\email{ilaria.fragala@polimi.it}
\author{Matteo Novaga}
\address{Dipartimento di Matematica, Universit\`{a} di Pisa, Largo Bruno Pontecorvo 5, 56127 Pisa, Italy}
\email{matteo.novaga@unipi.it}

\begin{abstract} 
We prove the existence of periodic tessellations of $\R^N$  minimizing  a general nonlocal perimeter functional, defined as the interaction between a set and its complement through a nonnegative kernel, which we assume to be either  integrable at the origin, or singular, with a  fractional type singularity. We reformulate the optimal partition problem as an isoperimetric problem among fundamental domains associated with  discrete subgroups of $\R^N$, and we provide the  existence of a  solution  by using suitable  concentrated compactness type arguments and compactness results for lattices. Finally, we discuss  the possible optimality of the hexagonal tessellation  in the planar case. 
 \end{abstract} 
  
\subjclass{ 
52C07 
52C22 
53A10 
58E12  
 }
 
\keywords{Minimal partitions, nonlocal perimeter, honeycomb conjecture, Kelvin cell}
 
\maketitle 
 
\tableofcontents
 
\section{Introduction} 
An interesting class of problems in geometry and analysis concerns finding a tiling or  tessellation of $\R^N$, 
which is optimal for a given energy functional.  A famous problem in this direction, which is related to our work,  is the so-called {\it Kelvin problem},  posed by Lord Kelvin in 1887, see \cites{thompson, kelvin}: it consists in finding a partition of $\R^N$ into cells of equal volume, so that the total area of the surfaces separating them is as small as possible.  A complete solution to such problem is currently known only in dimension $N=2$, thanks to the celebrated paper by Hales  \cite{hales} 
(see also \cite{morgansolo}).   

We recall that a  tiling or tessellation of $\R^N$  is a collection of measurable subsets $\{E_k\}_{k\in\mathbb I}$,  where $\mathbb I$ is either finite or countable, such that 
\begin{enumerate} 
\item $|E_k|>0$ for all $k$,
\item $|E_k\cap E_j|=0$ for all $k\ne j$,
\item $|\R^N\setminus\cup_k E_k|=0$.
\end{enumerate}

In this paper we look for tessellations minimizing a general nonlocal perimeter functional, and we restrict to the class of   {\it lattice tilings},  that is, tessellations which are invariant  with respect to the action of  a $N$-dimensional group of translations (i.e.\ a lattice) of $\R^N$.

Such problem can be restated in a completely equivalent way by using the terminology of fundamental domains associated  with   a lattice. In particular,   each  fundamental domain $D$ for a  lattice $G$  is uniquely associated  with  a 
lattice  tiling of $\R ^N$, given by  $\{ D+g \} _ {g \in G}$. 
 

Let now describe the  criterium  for optimality among all possible lattice tilings, or equivalently among all fundamental domains. We consider a  
  measurable  kernel $K$ on $\R ^N$,  that we assume to be nonnegative and symmetric (i.e.\ satisfying  $K(x)=K(-x)\ge 0$ for every $x\in \R ^N$), 
 and we associate  with  $K$ the nonlocal perimeter
  \begin{equation}\label{perfrac}
  \Per_K(E):=  \int_{E }\int_{\R^N\setminus E}    K( x-y)dxdy \,.  
  \end{equation}
 More specifically, we  focus our attention on two relevant classes of kernels, respectively of  fractional and integrable type, i.e.\ satisfying one of the following assumptions
  \begin{equation}\label{frac}  \min \{ 1, |h| \} K ( h) \in L^ 1 (\R ^N) \, , \quad K(h)\geq C |h|^{-N-s} \text{ for some $C>0$ and $s\in (0,1)$;}
  \end{equation} 
  \begin{equation}\label{int}  K \in L^1(\R^N) \ \text{ with } \|K\|_1>0, \  \text{ and } \  \liminf_{z\to 0^+}\big [  K(z)-K(z+x) \big ] >0\ \  \forall x \neq 0.  
\end{equation}

The first tessellation problem we investigate  is obtained by fixing a discrete  group of translations. It can be stated precisely 
as follows: given a lattice $G$ in $\R^N$, 
we 
consider the isoperimetric problem
 \begin{equation}\label{iso1} \inf_{D\in\D_G} \Per_K(D)\,, 
 \end{equation}  
where 
$\D_G$ denotes the class of  fundamental domains for $G$, see Definition \ref{def:fund}
(notice in particular that all  fundamental domains for $G$ have the same volume). 

We prove the following existence result: 
 \begin{theorem}\label{thex1} 
 Let $K$ be a nonnegative symmetric  kernel on $\R ^N$,   satisfying one of the assumptions \eqref{frac}-\eqref{int}. 
 For any given lattice $G$ in $\R ^N$, problem \eqref{iso1} admits a solution, namely there exists a fundamental domain $D\in\D_G$ such that
 \[\Per_K(D)=\min_{E\in\D_G} \Per_K(E). \]
\end{theorem}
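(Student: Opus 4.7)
The strategy is the direct method of the calculus of variations; the chief difficulty is compactness, since fundamental domains are unbounded and mass can be redistributed across infinitely many lattice cells without leaving $\D_G$. Let $\{D_n\}\subset\D_G$ be a minimizing sequence with $L:=\sup_n\Per_K(D_n)<\infty$, and fix a bounded reference fundamental domain $Q\in\D_G$ (for instance, a Voronoi cell). Every $D\in\D_G$ then admits the canonical decomposition $D=\bigsqcup_{g\in G}(A_g+g)$ with $A_g:=(D-g)\cap Q$, where $\{A_g\}_{g\in G}$ is a measurable partition of $Q$ and $\sum_g|A_g|=|Q|$. Writing $A_g^n$ for the pieces associated to $D_n$ and replacing each $D_n$ by a lattice translate so that $|A_0^n|=\max_g|A_g^n|$ (which leaves $\Per_K(D_n)$ unchanged), the heart of the argument is to prove tightness of the mass profile $g\mapsto|A_g^n|$: for every $\eps>0$ there exists a finite set $G_\eps\subset G$ with $\sum_{g\notin G_\eps}|A_g^n|<\eps$ for all $n$.

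The tightness step uses the two assumptions on $K$ in essentially different ways. In the integrable case \eqref{int}, the decomposition $\R^N\setminus D=\bigsqcup_{g\ne 0}(D+g)$ together with Fubini yields the identity
\[
\Per_K(D)=|Q|\,\|K\|_1-I(D),\qquad I(D):=\int_D\!\int_D K(x-y)\,dx\,dy,
\]
so that minimizing $\Per_K$ over $\D_G$ is equivalent to maximizing the bounded functional $I$. Splitting $I(D)$ over the pieces and observing that any bijection $\sigma:G\to G$ produces a new fundamental domain $D^\sigma:=\bigsqcup_g(A_g+\sigma(g))$, the strict-maximum hypothesis $\liminf_{z\to 0^+}[K(z)-K(z+x)]>0$ for $x\ne 0$ gives a quantitative gain when distant pieces are ``repacked'' closer to the bulk; this forces tightness along any minimizing sequence. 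In the fractional case \eqref{frac}, the pointwise bound $K(h)\ge C|h|^{-N-s}$ gives $\Per_K(D_n)\ge c\,\Per_s(D_n)$, and the nonlocal isoperimetric and density estimates for the fractional perimeter $\Per_s$ control the measure of the pieces sitting at large distance from the bulk.

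Once tightness is established, the uniform (fractional) perimeter bounds yield compactness in $L^1(\R^N)$, so along a subsequence $\chi_{D_n}\to\chi_{D_\infty}$ strongly and a.e. The conditions defining $\D_G$ — the volume constraint $|D_\infty|=|Q|$, mutual disjointness of the translates $\{D_\infty+g\}_{g\in G}$ up to null sets, and $|\R^N\setminus\bigcup_g(D_\infty+g)|=0$ — all pass to the strong $L^1$ limit, so $D_\infty\in\D_G$. Lower semicontinuity
\[
\Per_K(D_\infty)\le\liminf_{n\to\infty}\Per_K(D_n)
\]
then follows from Fatou's lemma applied to $\Per_K(E)=\tfrac12\iint K(x-y)|\chi_E(x)-\chi_E(y)|\,dx\,dy$, so $D_\infty$ is a minimizer. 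The main obstacle is the tightness step, which is precisely where the concentration-compactness philosophy enters and where the assumptions on $K$ play a genuine role.
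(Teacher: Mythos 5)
Your proposal has a genuine gap that is fatal in the integrable case, and a second, less severe gap in the fractional case.

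The integrable case is the heart of the difficulty, and your proposal runs aground precisely where the paper warns it would. When $K\in L^1(\R^N)$, a uniform bound $\sup_n \Per_K(D_n)<\infty$ gives \emph{no} compactness for $\chi_{D_n}$ in $L^1_{\rm loc}$ (this is exactly the failure of compactness for weakly singular quadratic forms documented in Jarohs--Weth, cited in the paper). In particular, even if you proved tightness of the mass profile $g\mapsto|A_g^n|$, you could not extract a subsequence with $\chi_{D_n}\to\chi_{D_\infty}$ strongly in $L^1$: the best one gets is weak-$*$ convergence in $L^\infty$ to some density $f\in[0,1]$ that need not be a characteristic function. Your sentence ``the uniform (fractional) perimeter bounds yield compactness in $L^1(\R^N)$'' silently imports the fractional hypothesis into the integrable case, where it does not apply. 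The paper's resolution is structurally different: it relaxes the problem to the class $\mathcal A_G$ of fundamental \emph{densities}, rewrites $\Per_K=m\|K\|_1-\mathcal J_K$ and maximizes $\mathcal J_K$ over $\mathcal A_G$ using a cluster-by-cluster concentration-compactness argument that converges only weakly-$*$, and then, crucially, derives first- and second-variation conditions for the optimal density. The second variation, combined with the hypothesis $\liminf_{z\to 0^+}[K(z)-K(z+x)]>0$, is what forces the optimal density to take values in $\{0,1\}$ a.e.\ (and boundedness of its support follows from the first variation via the associated potential $V$). None of this appears in your outline; the strict-monotonicity condition on $K$ is used by you only heuristically in the ``repacking'' step, whereas in the paper it is the precise ingredient that rules out a diffuse optimizer after relaxation.

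In the fractional case your route is closer in spirit but still asserts more than it proves, and more than is actually needed. You attempt to establish tightness of a single translate of the minimizing sequence, invoking ``nonlocal isoperimetric and density estimates,'' but no argument is given, and tightness after a single translation is not what the paper proves nor what the concentration-compactness machinery naturally yields. The paper instead allows mass to split into countably many pieces $D^i$ (each the $L^1_{\rm loc}$ limit of $D_h-g_h^i$ with $|g_h^i-g_h^j|\to\infty$), then shows using the lattice structure that the reassembled set $D=\bigcup_i D^i$ is again a fundamental domain for $G$, and finally uses submodularity of $\Per_K$ to get $\Per_K(D)\le\sum_i\Per_K(D^i)\le\liminf_h\Per_K(D_h)$. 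Your observation that any bijection $\sigma:G\to G$ turns $D=\bigsqcup_g(A_g+g)$ into another fundamental domain $D^\sigma=\bigsqcup_g(A_g+\sigma(g))$ is a correct and potentially useful algebraic remark, and could perhaps be developed into an alternative reassembly argument, but as written it is a gesture, not a proof. To repair your proposal you would need (a) to actually carry out the dichotomy/compactness step rather than claim tightness, and (b) to abandon the strong-$L^1$ framework entirely in the integrable case in favor of a relaxation to densities plus a rigidity argument showing the optimizer is $\{0,1\}$-valued.
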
 

 This result is obtained by  different methods  depending on whether  the interaction kernel is singular at the origin or not. 
 
 In the first case, we exploit the lower semicontinuity and compactness properties  of the perimeter functional, along with a by now  well-established  version of the concentration-compactness argument. For the local perimeter functional, this argument dates back to Almgren (see the monograph \cite{maggi}),  
and it has been already used to attack isoperimetric problems in finite or infinite clusters,  for local isotropic and anisotropic perimeters, see \cite{cnparti, cntiling, npst}. 
Let us also mention that  isoperimetric problems for the local perimeter  in the class of fundamental domains 
on compact Riemannian  manifolds have been considered in the literature, see \cite{choe, mnpr}.

 In the other case, when the interaction kernel is integrable at the origin, sets of bounded perimeter do not enjoy anymore any compactness property
  (see \cite{JW}),   so that
we have to adopt another strategy, which consists in relaxing the problem to $L^1$ densities. 
In this relaxed framework, a concentration-compactness argument  allows to 
prevent the complete loss of mass at infinity of a minimizing sequence, so that we obtain the  existence of an optimal density,  which minimizes 
a suitably relaxed perimeter functional.  Then, we derive the optimality conditions  satisfied by such an optimal density, from which    we deduce, 
by appropriate potential type arguments, qualitative properties of an optimal fundamental density. 
In particular, from the first variation, we deduce that an optimal fundamental density has bounded support. 
From the second variation, using the second condition in \eqref{int}, that is $\liminf_{z\to 0^+}\big [  K(z)-K(z+x) \big ] >0$ for $x\neq 0$, we conclude that the  
an optimal density is  actually the characteristic function of an optimal fundamental domain.  
Notice that, in general, if the interaction kernel is locally constant around the origin, we can prove only the existence of an optimal fundamental density, 
but we cannot expect it to be necessarily a characteristic function.   
Such arguments have been already exploited in the study of isoperimetric problems for nonlocal perimeter with one or two phases in \cite{cn,cdnp}. 
 
The second and more general isoperimetric problem we investigate is obtained by allowing the group $G$ to vary in the family $\G_m$ of lattices with volume $m$ (see Definition \ref{def:lattice}), hence admitting all possible lattice tilings of the space. More precisely, we consider the problem
 \begin{equation}\label{iso2} 
 \inf_{G\in\G_m} \inf_{E\in\D_G} \Per_K(E)\,. 
 \end{equation}  

Also for problem \eqref{iso2}, we obtain the existence of a solution:
 
 \begin{theorem}\label{thex2} 
  Let $K$ be a nonnegative symmetric  kernel on $\R ^N$,   satisfying one of the assumptions \eqref{frac}-\eqref{int}. 
 For any given $m>0$, problem \eqref{iso2} admits a solution, namely  there exists a lattice $G\in \G_m$ and a fundamental domain $D\in\D_G$ such that
 \[\Per_K(D)=\min_{G\in\G_m,\,E\in\D_G} \Per_K(E). \]
 \end{theorem}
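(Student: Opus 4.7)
The plan is to reduce Theorem~\ref{thex2} to Theorem~\ref{thex1} by extracting a convergent subsequence from a minimizing sequence of lattices. I would take $(G_n) \subset \G_m$ along which the double infimum in \eqref{iso2} is attained in the limit, and let $D_n \in \D_{G_n}$ be the minimizer given by Theorem~\ref{thex1}. A fixed admissible competitor, for instance the cubic lattice of covolume $m$ with the cube as fundamental domain, immediately yields a uniform upper bound $\Per_K(D_n) \leq C$.

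The main obstacle is to rule out degeneration of the lattices $G_n$ in $\G_m$. Since the covolume is fixed equal to $m$, Mahler's compactness theorem reduces this to a uniform lower bound on the systole of $G_n$ (the length of its shortest nonzero vector). This lower bound must be extracted from the uniform perimeter bound via the following geometric mechanism: if $G$ admits a very short vector $v$, then every fundamental domain is forced to be slab-like transverse to $v$, and combined with the prescribed covolume $m$ this produces a large interface between $D$ and $\R^N \setminus D$. For fractional kernels this follows from $K(h) \geq C|h|^{-N-s}$ together with the divergence of the fractional perimeter of thin slabs. For integrable kernels one exploits that $D$ and $D+v$ are disjoint in the tiling while $|v|$ is small, so $\chi_D$ and $\chi_{D+v}$ differ on a set of measure $2m$ at distance at most $|v|$; combined with $\|K\|_1>0$ and the nondegeneracy condition in \eqref{int}, this forces $\Per_K(D) \to \infty$ as the systole of $G$ tends to zero, contradicting the uniform bound.

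Once $G_n$ is confined to a compact subset of $\G_m$, Mahler's theorem provides $G_n \to G \in \G_m$ along a subsequence. I would then apply the same strategy used for Theorem~\ref{thex1}: in the fractional case, a concentration-compactness argument in which pieces of $D_n$ drifting to infinity are relocated by $G_n$-translations into a bounded window produces a limit set $D$ of finite $\Per_K$; in the integrable case, one passes to the relaxed $L^1$ formulation and extracts a limiting density, then upgrades it to a characteristic function via the first and second variation arguments already developed. The new point relative to the fixed-lattice setting is that the translations used live in varying lattices $G_n$; but since $G_n \to G$, the elements of $G_n$ in any bounded region are in bijection for large $n$ with the corresponding elements of $G$, so all relocations pass to the limit and $D \in \D_G$. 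The lower semicontinuity of $\Per_K$ proved earlier then yields
\[
\Per_K(D) \leq \liminf_{n\to \infty} \Per_K(D_n) = \inf_{G' \in \G_m,\, E \in \D_{G'}} \Per_K(E),
\]
so $(G, D)$ is a minimizer. The hardest part will be the systole estimate, i.e.\ making the slab argument quantitative and uniform across fundamental domains and across the two kernel regimes, in a form that meshes cleanly with the concentration-compactness step that follows.
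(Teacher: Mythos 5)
Your overall architecture (fixed competitor for a uniform bound, nondegeneracy of the lattice sequence, concentration--compactness with translations in the varying lattices $G_h$, lower semicontinuity) matches the paper's strategy in broad outline. However, your proposed mechanism for the nondegeneracy step contains a genuine error in the integrable case, and it also departs from the paper's actual argument in the fractional case.

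The error: you claim that if the systole of $G$ tends to zero, then $\Per_K(D)\to\infty$ ``contradicting the uniform bound,'' in both regimes. For an integrable kernel this is impossible: every $f\in L^1(\R^N;[0,1])$ with $\int f=m$ satisfies
\[
\mathcal P_K(f)=\int_{\R^N}\!\!\int_{\R^N} f(x)\bigl[1-f(y)\bigr]K(x-y)\,dx\,dy\le m\,\|K\|_{L^1},
\]
so $\Per_K(D)\le m\|K\|_{L^1}$ for every fundamental domain whatsoever, and the quantity simply cannot blow up. In particular the ``uniform upper bound from a cubic competitor'' is vacuous in the integrable case, and there is no contradiction to be extracted. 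The correct statement in this regime is the opposite one: degeneration of the lattice forces $\mathcal J_K(\chi_{D_h})=\int\!\!\int \chi_{D_h}(x)\chi_{D_h}(y)K(x-y)\to 0$, i.e.\ $\Per_K(D_h)\to m\|K\|_{L^1}$ from below, which is the \emph{maximal} possible value; a minimizing sequence has $\Per_K$ bounded strictly away from $m\|K\|_{L^1}$ (equivalently $\mathcal J_K$ bounded away from zero), and this is what rules out degeneration. This is exactly the content and role of Lemma~\ref{lemmacompattezzaint} in the paper: the hypothesis is $\inf_h\mathcal J_K(f_h)>0$, not a bound on $\Per_K$.

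Apart from this, the route you propose for nondegeneracy differs from the paper's also in the fractional case. You want a quantitative ``slab'' estimate bounding $\Per_K$ below in terms of the systole (a fractional analogue of $|D\triangle(D+v)|\le C|v|^s\Per_s(D)$ combined with $|D\triangle(D+v)|=2m$ for a short lattice vector $v$). Such an estimate does hold and can be made to work, but the paper does not prove or use it. Instead, the paper proves the dichotomy (Theorem~\ref{lemmaconvergence}): a sequence of lattices in $\G_m$ either converges to a lattice in $\G_m$ or to a closed group containing a line, and in the latter case every associated sequence of fundamental densities converges weakly$^*$ to $0$. The concentration--compactness lemma for bounded fractional perimeter guarantees that after translations no mass is lost, which is incompatible with weak$^*$ convergence to zero; this indirect argument replaces any explicit systole bound. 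You would need either to prove the slab estimate you invoke (it is not in the paper) or to switch to the dichotomy argument. In either case, the integrable branch of your proposal must be rewritten along the lines of Lemma~\ref{lemmacompattezzaint}, since as stated it rests on a divergence that cannot occur.
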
 
 
 This result is proved by combining the existence of  isoperimetric fundamental domains associated with a fixed group $G$, 
 with a nondegeneracy  property of a sequence of lattices with fixed volume, and with an
upper bound on the nonlocal  perimeter of the associated  fundamental domains. 
The nondegeneracy property mentioned above is based on classical compactness results for lattices, such as 
Mahler’s compactness Theorem and its generalizations (see \cite{ma, CS}). 
 
A natural question concerns the regularity of optimal fundamental domains in problems \eqref{iso1} and \eqref{iso2}. In this respect, the first observation is that every bounded fundamental domain has at least one singular point, due to the fact that its translations tessellate the space (for details we refer to Corollary 4.11 in \cite{cnparti}). 
In the case of integrable kernels,   that is,   when $K$ satisfies assumption \eqref{int},  
by potential theory arguments we obtain 
that every   optimal fundamental domain  is bounded (see Remark \ref{r:bounded}), but we cannot show partial regularity of the boundary.
On the other hand, when $K(h)=C |h|^{-N-s}$ for some $C>0$ and $s\in (0,1)$, by geometric measure theory arguments, 
it is possible to show  that  every optimal fundamental domain $D$ is bounded  and that $\partial D$ is smooth outside a closed nonempty singular  set $\Sigma$ with $\mathcal{H}^{N-1}(\Sigma)=0$ (discrete if $N=2$).  Such results   for  minimal  lattice tilings in the fractional setting are detailed in  \cite[Theorem 4.10]{cnparti} and are based on previous regularity results obtained for   finite partitions minimizing the fractional perimeter in \cite{colombomaggi}. 
 
Besides the  existence and regularity of minimal tessellations, a further 
interesting question is  the description of the   explicit structure of these partitions. This problem is quite hard, and, as mentioned at the beginning of this Introduction, 
the explicit shape of an optimal fundamental domain is known only in the planar case for local perimeters, see \cite{hales}. 
By analogy one expects that, also for nonlocal perimeters,   the optimal tiling  in the plane  should be the tessellation by regular hexagons.  
In the last section of the paper we give a partial result in this direction, holding when the class of admissible fundamental domains is restricted  to convex polygons.
 
 \smallskip
 
The paper is organized as follows. Section $2$ contains the main definitions and results about lattices in $\R^N$ we are going to use in the paper. Section $3$ is devoted to the proofs of Theorems \ref{thex1} and \ref{thex2} in the case $K$ is singular at the origin with fractional type singularity. Section $4$ deals with the proofs of the same results, in the case $K$ is integrable at the origin. Finally in Section $5$ we provide a partial result about the explicit shape of an optimal 
lattice tiling for nonlocal perimeters in the planar case. 
 
 \section{Lattices, fundamental domains and fundamental densities} 
We recall some standard definitions and basic properties of lattices in $\R^N$. 
For a general introduction to the subject we refer to the monograph \cite{CS}, and references therein. 
 
\begin{definition}\label{def:lattice}
A {\it lattice} is a discrete subgroup $G$ of $(\R^N, +)$ of rank $N$. 
The elements of $G$ can be expressed as $\sum _{i=1} ^N k_iv_i$, for a given basis  $(v_1, \dots, v_N)$ of $\R^N$, with    coefficients $k_i\in \Z$.   Any two bases for a lattice $G$ are related  by a matrix with integer coefficients and determinant equal to $\pm1$.  
 \end{definition}
 
 \begin{definition}
 The {\it volume of a lattice }$G$ is the number  $d(G)\in (0,+\infty)$ uniquely determined as 
 the absolute value of the determinant of the matrix of any set of generators $(v_1, \dots, v_N)$. Equivalently, if the lattice is viewed as a  discrete group of  isometries of $\R^N$, $d(G)$ coincides with the volume of the quotient torus $\R^N/G$. 
 
For a given $m\in (0,+\infty)$, we denote by $\G_m$ the family of  lattices  $G$ with $d(G)=m$.
\end{definition} 

\begin{definition}\label{def:dist} 
The  {\it minimum distance}   $\lambda(G)>0$  in a lattice $G$ is the length of the shortest nonzero element of $G$.
In particular, for every $p,q\in G$, there holds that $|p-q|\geq \lambda (G)$.
\end{definition}

\begin{definition}\label{def:fund} We say that $D\subset\R^N$ is a {\it fundamental domain} for a lattice $G$ 
if it  contains almost all representatives for the orbits of $G$ and the set of points
whose orbit has more than one representative has measure zero, i.e.
$$|\R ^N \setminus \bigcup _{g \in G} ( D + g) | = 0 \qquad \text{ and } \qquad  | D \cap ( D + g )| = 0 \quad \forall g \in G \setminus \{ 0 \} \,.$$ 

We denote by $\D_G$ the set of all fundamental domains of $\R^N$  for the group $G$.  

Notice that $|D|=d ( G)$ for all $D\in\D_G$. 
  \end{definition} 
 
\begin{example}
\upshape \label{remcomp} 
(i) If $(v_1, \dots, v_N)$ is a set of generators of  a lattice 
 $G$ as a $\Z$-module, then  the set $D=\{x=\sum_{i=1}^N t_iv_i, \ t_i\in [0,1)\}$ is a fundamental domain for $G$. 
 
 \smallskip
(ii) The {\it Voronoi cell} $V_G$  of a lattice $G$ is a fundamental domain for $G$.  Recall that $V_G$ is a  the centrally symmetric  convex polytope 
defined by  
 \[V_G:= \big \{x\in \R^N :\  |x|\leq |x-g| \quad \forall g\in G, g\neq 0 \big \}. \]

 \end{example}

We now introduce  a weaker version of the notion of fundamental domain, which will be useful when dealing with functionals with integrable interaction kernels. 
  \begin{definition}\label{def:fundrel} Given $G \in \mathcal G _m$, we call  {\it fundamental density} for the lattice $G$ 
a function $f$ in  the class 
  \begin{equation}\label{ag} \mathcal{A}_G :=\Big\{f\in L^1(\R^N; [0,1])  \ :\  \sum_{g\in G} f(x+g)= 1 \text{ a.e.} \Big\} \,.\end{equation}
   \end{definition} 

\begin{remark} Notice that,
when $f = \chi _D$, 
  $D$ is a fundamental domain for $G$ according to Definition \ref{def:fund} if and only if $f$ is a fundamental density for $G$ according to Definition \ref{def:fundrel}. 
  \end{remark}

\medskip
We now give a closer look at the properties of fundamental densities, specifically concerning their mass. The next lemma establishes in particular that all fundamental densities for a group $G \in \mathcal G_m$ have mass $m$ (item (ii)); moreover, for functions of mass $m$, the equality in the definition of fundamental density can be relaxed into an inequality  (item  (iii)).

\begin{lemma}\label{lemma1} 
Given $G \in \mathcal G _m$ and $ f\in L^1(\R^N; [0,1])$, we have: 
\begin{itemize}

\item[(i) ] If $\sum_{g\in G} f(x+g)\leq 1$ a.e., then $\int_{\R^N} f(x)dx\leq m$. 

\smallskip
\item[(ii)] If  $\sum_{g\in G} f(x+g)=1$ a.e., then $\int_{\R^N} f(x)dx= m$. 

\smallskip
\item[(iii)] If    $\sum_{g\in G} f(x+g)\leq 1$ a.e. and $\int_{\R^N} f(x)dx= m$, then  $\sum_{g\in G} f(x+g)=1$ a.e.
 
\end{itemize}
 \end{lemma}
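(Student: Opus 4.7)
The plan is to reduce all three statements to a single identity, obtained by integrating $f$ against the partition of $\R^N$ given by the translates of a fundamental domain.

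First I would fix any fundamental domain $D \in \D_G$ (for instance, the parallelepiped $\{\sum t_i v_i : t_i \in [0,1)\}$ from Example \ref{remcomp}(i)), which satisfies $|D|=d(G)=m$. Since $f \geq 0$, Tonelli's theorem, together with the fact that the translates $\{D+g\}_{g \in G}$ partition $\R^N$ up to a null set, yields
\begin{equation}\label{eq:tonelli}
\int_{\R^N} f(x)\,dx \;=\; \sum_{g \in G} \int_{D+g} f(x)\,dx \;=\; \sum_{g \in G} \int_{D} f(x+g)\,dx \;=\; \int_{D} \Big(\sum_{g \in G} f(x+g)\Big)\,dx,
\end{equation}
where in the last step the interchange is again justified by Tonelli.

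From \eqref{eq:tonelli}, items (i) and (ii) follow immediately: if the periodic sum is $\leq 1$ a.e., the right-hand side is at most $|D| = m$, giving (i); if it equals $1$ a.e., the right-hand side equals $m$ exactly, giving (ii). For (iii), assuming $\sum_g f(\cdot+g) \leq 1$ a.e.\ and $\int_{\R^N} f = m$, identity \eqref{eq:tonelli} forces
\[
\int_{D} \Big(1 - \sum_{g \in G} f(x+g)\Big)\,dx \;=\; 0,
\]
and since the integrand is nonnegative, we conclude $\sum_{g \in G} f(x+g) = 1$ for a.e.\ $x \in D$.

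To upgrade this from $D$ to $\R^N$, I would observe that the function $F(x) := \sum_{g \in G} f(x+g)$ is $G$-periodic: for any $g_0 \in G$, reindexing the sum by $g \mapsto g - g_0$ gives $F(x+g_0) = F(x)$ for a.e.\ $x$. Since $F = 1$ a.e.\ on $D$ and $\{D+g\}_{g \in G}$ covers $\R^N$ up to a null set, $G$-periodicity propagates the equality to a.e.\ $x \in \R^N$, completing (iii). The only thing that required care was the Tonelli interchange and the periodicity argument, both of which are routine; there is no substantial obstacle here since nonnegativity of $f$ makes all summations and integrals unambiguously defined.
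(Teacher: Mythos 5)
Your proof is correct, and for items (i) and (ii) it coincides with the paper's (Tonelli for a nonnegative function and termwise integration by monotone convergence are the same manipulation). The interesting difference is in item (iii). The paper argues by contradiction: if the periodic sum drops below $1-\eps$ on a set $E$ of positive measure (which one may take to meet each $G$-orbit at most once), it builds the competitor $\tilde f = f + \eps\chi_E$, checks that $\tilde f$ still lies in $\overline{\mathcal A}_G$, and then invokes item (i) to get $\int \tilde f \leq m$, contradicting $\int \tilde f = m + \eps|E| > m$. You instead run the identity \eqref{eq:tonelli} forward: with $\int f = m$ it forces $\int_D\bigl(1-\sum_g f(x+g)\bigr)\,dx = 0$, and since the integrand is a.e.\ nonnegative it must vanish a.e.\ on $D$; the $G$-periodicity of $F(x)=\sum_g f(x+g)$ (a pointwise reindexing of a nonnegative series, valid everywhere) then propagates the conclusion from $D$ to all of $\R^N$. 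Your route is more direct and avoids the explicit construction of an auxiliary density; the paper's route has the mild advantage that the perturbation $\tilde f = f+\eps\chi_E$ it builds is reused conceptually in Lemma \ref{lemma3}, so the authors get some economy by introducing it here. Both arguments are complete and correct.
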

\begin{proof} (i)  Let $D$ be a fundamental domain for $G$. By monotone convergence theorem,  we have
\[  \int_{\R^N} f(x)dx=\sum_{g\in G} \int_{D-g}  f (x)dx=\sum_{g\in G} \int_{D} f(x+g)dx=\int_{D} \sum_{g\in G} f(x+g)dx\leq \int_{D}1dx= |D|=m. \]

(ii)  Just replace the inequality in the above line by an equality. 

(iii) Assume by contradiction that   there exist $\eps>0$ and a measurable set $E$ with $|E|>0$ such that  $\sum_{g\in G} f(x+g)<1-\eps$ for a.e. $x\in E$. We may assume without loss of generality that $|E\cap( E+g)|=0$ for all $g\in G$, $g\neq 0$: it is sufficient to substitute $E$ with a subset   containing at most one representative for every orbit of $G$ (that is $E/G$). 
 
Consider the function $\tilde f(x):= f(x)+\eps \chi_E(x)$. It  belongs to $ L^1(\R^N; [0,1])$, with 
$\int_{\R^N}\tilde f(x)dx= \int_{\R^N} f(x)dx+ \eps |E|> m$.  Then, thanks to 
item (i), in order to get a contradiction it is enough to show that  the inequality $\sum_{g\in G} \tilde f(x+g)\leq 1$ holds true a.e. This is readily checked by construction. 
Indeed, 
for $x\not \in E$, we have  
by definition  that   $\sum_{g\in G} \tilde f(x+g)=\sum_{g\in G} f(x+g)$,  whereas,  for $x\in E$, we have $\chi_E(x+g)=0$ for all $g\in G$, $g\neq 0$, and hence 
$\sum_{g\in G} \tilde f(x+g)\leq \sum_{g\in G} f(x+g)+\eps\leq 1-\eps+\eps=1$.   \end{proof}

%
%
  We conclude this section by introducing a notion of convergence for lattices  \cites{cassels, ma}, and stating a  compactness result for lattices of given volume. 
%

 \begin{definition}\label{def:kur}
A sequence of lattices {\it $G_h$ converges in the Kuratowski sense to $G$}, if
\[G=\{g\in \R^N \ :\ \limsup_{h\to +\infty} d(g, G_h)=0\},\]
where $d ( \cdot,  G _h)$ denotes the Euclidean distance from $G _h$.  
\end{definition}

 \begin{remark}\label{r:lambda} 
(i)  The above definition is equivalent to the local Hausdorff convergence, namely $G_h\to G$ in the Kuratowski sense if and only if, 
for every compact set $K$ and for every 
$\eps>0$,  the following inclusions hold for $h$ sufficiently large: 
\[G\cap K\subseteq \{x\in K \ :\ d(x, G_h)\leq \eps\}\quad\text{and }\quad G_h\cap K\subseteq \{x\in K \ :\ d(x, G)\leq \eps\}.\]

\noindent (ii) The limit set $G$ in the Definition \ref{def:kur} turns out to be a closed subgroup of $(\R^N, +)$.

\noindent   (iii) The minimum distance functional $\lambda ( \cdot)$ introduced in Definition \ref{def:dist} is continuous with respect  to the Kuratowski 
convergence of lattices (when the definition is extended also to closed subgroups of $(\R^N, +)$, as the infimum of the length of their nonzero elements). 
 \end{remark}

\begin{theorem}\label{lemmaconvergence} 
Let $G_h$ be a sequence of lattices with $d(G_h)=m>0$ for every $h$.
Then, up to subsequences, $G_h\to G$ in the Kuratowski sense, where $G$ is either   a lattice with $d(G)=m$   or
a closed group which contains a line. In the second case, every sequence  $f_h$    of fundamental  densities  for $G_h$ converges to $0$  
weakly$^*$  in $L^\infty(\R ^N)$.  \end{theorem}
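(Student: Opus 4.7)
The plan is to combine a compactness result for closed subsets of $\R^N$ with the structural classification of closed subgroups of $(\R^N, +)$. By the general compactness of closed subsets in the Kuratowski (equivalently, local Hausdorff) topology, up to a subsequence $G_h$ converges to some closed set $G$, which by Remark \ref{r:lambda}(ii) is a closed subgroup. Since every closed subgroup of $\R^N$ decomposes as $V \oplus L$ with $V$ a linear subspace and $L$ a discrete subgroup of $V^\perp$, the only alternative to $G$ containing a line is that $G$ is discrete. The proof therefore splits into two tasks: showing that the discrete case forces $G$ to be a full rank lattice with $d(G) = m$, and showing that the non-discrete case forces $f_h \to 0$ weakly-$*$ in $L^\infty(\R^N)$.

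For the discrete case, I would invoke Minkowski's second theorem on successive minima, which yields $c_N\, m \le \lambda_1(G_h) \cdots \lambda_N(G_h) \le C_N\, m$ for dimensional constants $c_N, C_N$. Discreteness of $G$ is equivalent to $\lambda(G) > 0$, which together with the continuity stated in Remark \ref{r:lambda}(iii) gives $\lambda_1(G_h) \ge c > 0$ for large $h$. Minkowski's bound then forces all successive minima into a compact subinterval of $(0, \infty)$, so I can pick bases $(v_1^h,\ldots,v_N^h)$ of $G_h$ realizing these minima and extract a subsequential limit $(v_1,\ldots,v_N)$. The volume of the parallelepiped they span equals $\lim d(G_h) = m > 0$, hence the $v_i$ are linearly independent, and the lattice $G' := \Z v_1 + \cdots + \Z v_N$ satisfies $d(G') = m$. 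That $G = G'$ follows from local Hausdorff convergence: every $v_i$ lies in $G$, and conversely every $x \in G$ is a limit of points $\sum_i k_i^h v_i^h \in G_h$ with integer coefficients $k_i^h$ uniformly bounded (the bases being uniformly well-conditioned), so a subsequence stabilizes and yields $x \in G'$.

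For the non-discrete case, suppose $G$ contains a line $\R v$. Since $0 \le f_h \le 1$, Banach--Alaoglu extracts a weak-$*$ subsequential limit $f \in L^\infty(\R^N;[0,1])$. The key inequality is that, for any $n$ distinct $g_1^h, \ldots, g_n^h \in G_h$ and any nonnegative $\phi \in L^1(\R^N)$, the identity $\sum_{g \in G_h} f_h(\,\cdot\, + g) \equiv 1$ together with $f_h \ge 0$ yields, after a change of variables,
\[
\sum_{i=1}^n \int_{\R^N} \phi(x - g_i^h)\, f_h(x)\, dx \;\le\; \int_{\R^N} \phi(x)\, dx.
\]
Given arbitrary distinct reals $t_1, \ldots, t_n$, Kuratowski convergence $G_h \to G \supseteq \R v$ allows picking $g_i^h \in G_h$ with $g_i^h \to t_i v$, necessarily distinct for large $h$. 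Passing to the limit, combining weak-$*$ convergence of $f_h$ with $L^1$-continuity of translations on $\phi$, gives $\sum_{i=1}^n F_\phi(t_i) \le \|\phi\|_1$, where $F_\phi(t) := \int_{\R^N} \phi(x - tv)\, f(x)\, dx$.

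Since $F_\phi$ is continuous in $t$ and the bound above holds for arbitrarily many distinct real $t_i$, one must have $F_\phi \equiv 0$: otherwise continuity would give a neighborhood on which $F_\phi \ge F_\phi(t_0)/2 > 0$, making the left-hand side unbounded as $n \to \infty$. Setting $t = 0$ for arbitrary $\phi \ge 0$ in $L^1(\R^N)$ forces $f \equiv 0$; since every weak-$*$ subsequential limit of $(f_h)$ is zero, the whole sequence converges weakly-$*$ to zero. I expect the main delicate point to be the passage to the limit in the truncated constraint, simultaneously handling Kuratowski convergence of the $g_i^h$, strong $L^1$ convergence of $\phi(\,\cdot\, - g_i^h)$, and weak-$*$ convergence of $f_h$; the rest is the classical Mahler--Minkowski compactness reasoning.
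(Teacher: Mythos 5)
Your proof takes a genuinely different route from the paper's in all three main steps. For the dichotomy, you invoke the structure theorem for closed subgroups of $\R^N$ (so that $G\cong V\oplus L$ with $V$ a subspace and $L$ discrete in $V^\perp$), whereas the paper constructs a line in $G$ directly from a nonzero accumulation point via integer multiples. For the discrete case, you use Minkowski's second theorem on successive minima, while the paper cites Mahler's Theorem~1, which directly produces a basis $v_1^h,\ldots,v_N^h$ of $G_h$ with $\prod_i|v_i^h|\le C_N m$, together with Mahler's compactness theorem. For the degenerate case, you dualize: you integrate the finite constraint $\sum_{i=1}^n f_h(\cdot+g_i^h)\le 1$ against a fixed nonnegative $\phi\in L^1$, pass to the limit via weak-$*$ convergence of $f_h$ and $L^1$-continuity of translations applied to $\phi$, and then exploit continuity of $t\mapsto F_\phi(t)$. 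The paper instead passes the pointwise a.e.\ inequality $\sum_{n=1}^k f(x+t_n e)\le 1$ to the weak-$*$ limit and then lets the $t_n$ tend to $0$; your version makes the translation-continuity step more transparent, since it acts on the $L^1$ test function $\phi$ rather than on $f$ itself. The subsubsequence argument at the end is also slightly more explicit in your write-up.

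One point in your discrete case needs a repair: in general you cannot "pick bases $(v_1^h,\ldots,v_N^h)$ of $G_h$ realizing these minima" when $N\ge 5$. Linearly independent lattice vectors realizing the successive minima always exist, but they may generate a proper sublattice; the standard example is $\Z^5+\Z\,(\tfrac12,\ldots,\tfrac12)$, where all five minima equal $1$ and are attained on $e_1,\ldots,e_5$, which generate only the index-two sublattice $\Z^5$. The fix is routine: take a Minkowski-reduced (or HKZ-reduced) basis, which satisfies $|v_i^h|\le c_N\lambda_i(G_h)$ and so inherits the same compactness from Minkowski's bound, or simply invoke Mahler's Theorem~1 as the paper does. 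After this substitution the rest of your discrete-case argument, namely extraction of a limit basis with $|\det(v_1,\ldots,v_N)|=m>0$ and identification of $G$ with $\Z v_1+\cdots+\Z v_N$ via uniformly bounded integer coordinates that stabilize along a subsequence, goes through unchanged.
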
 

\begin{proof}
First of all, by  local compactness of the Hausdorff metric, there exists a (not relabeled) subsequence $G_{h}$ and a closed group  
$G$ such that $G_{h}\to G$ in the Kuratowski sense.

%

If there exists a positive constant $\delta>0$ such that $\lambda(G_{h})\geq \delta>0$ for every $h$, then by   the compactness theorem for lattices due to Mahler \cite[Theorem 2]{ma}, possibly passing to a subsequence, we have that $G_h\to G$, where $G$ is a lattice with $\lambda(G)\geq \delta$. 
We recall from \cite[Theorem 1]{ma} that there exists a dimensional constant $C_N$ such that every lattice $G_h$  admits a set of generators 
$v_1^h, \dots, v_N^h$  with 
$\Pi_{i=1}^N |v_i^h|\leq C_N m$, and the same holds for $G$, with generators $v_1, \dots, v_N$.   
We infer that the fundamental domains $D_h=\{x=\sum_{i=1}^N t_iv_i^k, \ t_i\in [0,1)\}$  for $G_h$ 
converges in $L^1(\R^N)$ to the fundamental domain $D=\{x=\sum_{i=1}^N t_iv_i, \ t_i\in [0,1)\}$  for $G$, which implies that $d(G)=m$.   

If,  on the other hand, $\limsup_h\lambda(G_{h})=0 $, we claim that $G$ contains a line. 
Indeed, in this case,  by Remark \ref{r:lambda} (iii) we have that  $\lambda ( G) = 0$. Therefore,  $G$ cannot be discrete and hence  it contains a sequence 
 $g_i$ converging to some $g\in G$, with $g_i\ne g$ for all $i$
 and such that $\frac{g_i-g}{|g_i-g|}\to e$, where $e$ is a unitary vector of $\R^N$. 
 Fix $r\in \R$ and, for every index $i$, let $z_i\in\Z$ be such that 
 $\left| z_i |g_i-g|-r\right|\leq |g_i-g|$. Then we have 
 \[|z_i (g_i-g)- r e|\leq |g_i-g|+ |r| \left|\frac{g_i-g}{|g_i-g|}-e\right|,
 \] 
so that the line $\R e$ lies in the closure of $G$. 

Finally, let $f_h$ be a sequence of fundamental densities for $G_h$. Then, up to passing to a subsequence, $f_h$ converges to $f$ weakly$^*$ in 
$L^\infty (\R^N)$. It remains to show that $f=0$. 
It is sufficient to show that, 
still denoting by
 $\R e$ the line lying in the closure of $G$, it holds
\begin{equation}\label{f:linea} \sum_{n=1}^k f(x+t_n e)\leq 1 \qquad \text{ for a.e. } x \in \R^N \, , \ \forall k \in \N\, , \ \forall t_n \in (0, 1) \,.
\end{equation}   
Indeed, by applying the above inequality  to an infinitesimal sequence $t_n$, and passing to the limit as $n \to + \infty$, 
we obtain that $k f(x)\leq 1$ for a.e. $x\in \R^N$ and  every $k\in\N$, so that $ f= 0$ a.e.\ on $\R^N$. 
In order to prove \eqref{f:linea},   for every $n$ we  pick a sequence  $g_n^h\in G_h$ such that $g_n^h\to t _n e $ as $h \to + \infty$ (which exists
by the Kuratowski convergence of $G_h$ to $G$): since $f_h$ are fundamental densities for $G_h$, we have 
$$\sum_{n=1}^k f_h(x+g_n^h)\leq 1 \qquad \text{ for a.e. } x \in \R^N \, , \ \forall k, n \in \N  \,,$$  
and   \eqref{f:linea} follows by passing to the weak$^*$ limit.  \EEE

   \end{proof}

\section{Proofs of Theorems \ref{thex1} and \ref{thex2} for fractional kernels}
 
  For fractional kernels satisfying \eqref{frac}, Theorem \ref{thex1} has been proved in \cite{cnparti}. 
Below we give the proof of Theorem \ref{thex2}. It is a straightforward consequence of the following  lemma.


\begin{lemma}[Concentration compactness - fractional kernels]\label{lemmacc}   Assume that $K$ satisfies \eqref{frac}. Let $G_h$ be a sequence of groups in $\mathcal G _m$, and let $D_h$ be a sequence of fundamental domains for $G_h$, such that 
\begin{equation}\label{unibound}\sup_h \Per_K(D_h) <+\infty \,.\end{equation}
Then, up to passing to a subsequence,  there exist a lattice $G \in \mathcal G _m$,  families $g_h^i\in G_h$ ($i\in\N, h\in\N$), with $|g^i_h-g_h^j|\to +\infty$ as $h\to +\infty$ for $i\neq j$, and a collection $(D^i)$ of measurable sets in $\R^N$, such that 
\begin{enumerate}
\item[(i)] $G_h$ converges in the Kuratowski sense to $G$;
\smallskip

\item[(ii)]  for every $i \in \N$, $D_h-g_h^i\to D^i$ locally in $L^1(\R^N)$ as $h\to +\infty$;
\smallskip

\item[(iii)] $\sum_i |D^i| =m$,  $|D^i \cap D ^j| = 0$ for $i \neq j$, and  $D := \cup _i D ^ i$ is a fundamental domain for $G$. 

\smallskip
\item[(iv)] $\Per_K(D) \leq \sum_i\Per_K(D^i)\leq  \liminf_h\Per_K(D_h)$.
\smallskip

\end{enumerate} 
\end{lemma}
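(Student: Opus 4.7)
The plan is to run an Almgren-style concentration-compactness iteration for sets of bounded fractional perimeter in the lattice-equivariant setting, combining it with the lattice compactness of Theorem~\ref{lemmaconvergence}. Since $K(h)\ge C|h|^{-N-s}$, the bound \eqref{unibound} yields $\sup_h\int_{D_h}\int_{\R^N\setminus D_h}|x-y|^{-N-s}\,dx\,dy<+\infty$, so $\{\chi_{D_h}\}$ is relatively compact in $L^1_{\mathrm{loc}}(\R^N)$ up to translations (by the $W^{s,1}$ form of Riesz--Fréchet--Kolmogorov). Along a subsequence, Theorem~\ref{lemmaconvergence} also gives $G_h\to G$ in the Kuratowski sense, with $G$ a priori either a lattice of volume $m$ or a closed subgroup containing a line. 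An iterative use of the concentration function $Q_h(R):=\sup_{y\in\R^N}|D_h\cap B_R(y)|$ then produces Euclidean translations $y_h^i$ with $|y_h^i-y_h^j|\to+\infty$ for $i\ne j$ and profiles $D^i$ with $|D^i|>0$ such that $\chi_{D_h-y_h^i}\to\chi_{D^i}$ in $L^1_{\mathrm{loc}}$; non-vanishing at each step follows from the fractional Sobolev embedding $W^{s,1}\hookrightarrow L^{N/(N-s)}$ combined with Lions' vanishing lemma, since $\|\chi_{D_h}\|_{L^q}=m^{1/q}$ is a positive constant.

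The crucial step is to rule out the degenerate case in which $G$ contains a line $\R e$. Suppose it did. For every $t\ne 0$, Kuratowski convergence would yield $g_h^{(t)}\in G_h$ with $g_h^{(t)}\to te$; since $D_h$ is a fundamental domain for $G_h$ and $g_h^{(t)}\ne 0$ eventually, the essential disjointness $D_h\cap(D_h+g_h^{(t)})=\emptyset$ would give $(D_h-y_h^i)\cap(D_h-y_h^i+g_h^{(t)})=\emptyset$. Passing to the $L^1_{\mathrm{loc}}$-limit and using $g_h^{(t)}\to te$ would produce $|D^i\cap(D^i+te)|=0$ for every $t\ne 0$. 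By Fubini along the direction $e$, each one-dimensional slice $S$ of $D^i$ would then satisfy $|S\cap(S+t)|=0$ for every $t\ne 0$; but the autocorrelation $t\mapsto|S\cap(S+t)|$ is continuous with value $|S|$ at $t=0$, forcing $|S|=0$ and hence $|D^i|=0$, a contradiction. Therefore $G\in\mathcal{G}_m$, which is (i). Once $G$ is a lattice, Mahler's theorem provides uniformly bounded reduced bases for $G_h$ and hence uniformly bounded Voronoi cells, so each $y_h^i$ can be replaced by the nearest lattice point $g_h^i\in G_h$ with bounded correction; up to a further subsequence and a corresponding bounded translation of $D^i$, one gets $\chi_{D_h-g_h^i}\to\chi_{D^i}$ in $L^1_{\mathrm{loc}}$, yielding (ii).

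For (iii), the disjointness $|D^i\cap D^j|=0$ passes to the $L^1_{\mathrm{loc}}$-limit from the disjointness of the distinct lattice translates $D_h-g_h^i$, $D_h-g_h^j$ of the fundamental domain $D_h$, and the mass identity $\sum_i|D^i|=m$ is the no-vanishing output of the iteration. Setting $D:=\bigcup_iD^i$, the two identities of Definition~\ref{def:fund} propagate from $D_h$ relative to $G_h$ to $D$ relative to $G$ by choosing, for each $g\in G$, a sequence $g_h\in G_h$ with $g_h\to g$ and passing the corresponding integrals on compact regions to the limit. For (iv), the inclusion $\R^N\setminus D\subset\R^N\setminus D^i$ together with $K\ge 0$ directly gives $\Per_K(D)\le\sum_i\Per_K(D^i)$; the inequality $\sum_i\Per_K(D^i)\le\liminf_h\Per_K(D_h)$ follows by localizing the double integrals $\int_{D_h}\int_{D_h^c}K(x-y)\,dx\,dy$ in disjoint neighborhoods around each $y_h^i$ of diameter much smaller than $\min_{j\ne i}|y_h^i-y_h^j|$, applying Fatou's lemma to each localized piece, and summing over $i$. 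The main technical obstacle is the no-mass-loss step $\sum_i|D^i|=m$ in (iii): it demands a careful iteration together with a uniform density estimate ensuring that no portion of $D_h$ escapes to infinity without being captured by some bubble.
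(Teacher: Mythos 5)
Your proposal follows the same overall architecture as the paper (concentration-compactness for the sets, Mahler/Kuratowski compactness for the lattices, replacement of Euclidean translations by nearby lattice translations, passage to the limit for disjointness and perimeter), but you reprove several ingredients from scratch and, more interestingly, you use a genuinely different argument to rule out the degenerate case in which the Kuratowski limit $G$ contains a line.

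The paper rules out that case by invoking the second alternative of Theorem~\ref{lemmaconvergence}: if $G$ contains a line, every sequence of fundamental densities for $G_h$ converges to $0$ weakly$^*$ in $L^\infty$, which forces $|D^i|=0$ and contradicts $\sum_i|D^i|=m>0$. You instead derive the contradiction directly: approximate each $te$ on the line by $g_h^{(t)}\in G_h$, pass the essential disjointness $|(D_h-y_h^i)\cap(D_h-y_h^i+g_h^{(t)})|=0$ to the $L^1_{\rm loc}$ limit to obtain $|D^i\cap(D^i+te)|=0$ for every $t\ne 0$, and then contradict the continuity of the autocorrelation $t\mapsto|D^i\cap(D^i+te)|$, which equals $|D^i|>0$ at $t=0$. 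This is a clean, self-contained alternative. A small remark: the Fubini/slicing detour is unnecessary and, as written, only yields the slice identity for a.e.\ $t$ rather than all $t$; the $N$-dimensional autocorrelation argument (continuity of $t\mapsto\|\chi_{D^i}\cdot\chi_{D^i+te}\|_{L^1}$ for $\chi_{D^i}\in L^2$) works directly and avoids that subtlety.

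Two further points worth flagging. First, the paper outsources the iteration, the mass identity $\sum_i|D^i|=m$, and the lower semicontinuity in (iv) to \cite[Lemma~3.4]{cnparti}, and the fundamental-domain structure of $D=\cup_iD^i$ to \cite[Lemma~2.2]{cnparti}; you instead sketch these steps, correctly identifying the no-mass-loss step as the technical heart. Second, your statement that the covering identity $|\R^N\setminus\cup_{g\in G}(D+g)|=0$ "propagates by passing integrals on compact regions to the limit" is not quite right: this is a global covering statement that does not pass to the limit locally. The correct route — which you effectively already have, since you record $\sum_i|D^i|=m$ — is to deduce covering from the mass identity $|D|=m=d(G)$ together with the essential disjointness $|D\cap(D+g)|=0$ for $g\ne 0$; this is precisely the content of Lemma~\ref{lemma1}(iii) applied to $f=\chi_D$ (equivalently, \cite[Lemma~2.2]{cnparti}). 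With that rephrasing, your argument is complete and consistent with the paper's, while being more self-contained and offering an independent proof of the key non-degeneracy step.
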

 
\proof By Theorem \ref{lemmaconvergence}, up to passing to a subsequence we can assume that $G_h$ converges in the Kuratowski sense either to a lattice $G \in \mathcal G _m$, or to a closed group which contains a line.

 By \cite[Lemma 3.4]{cnparti}, 
since
$D_h$ are measurable sets with fixed measure satisfying \eqref{unibound}, up to passing to a subsequence,  there exist  $z_h^i\in \Z^N$ ($i\in\N, h\in\N$),  with $|z^i_h-z_h^j|\to +\infty$ as $h\to +\infty$ for $i\neq j$, and a family $(D^i)$ of measurable sets in $\R^N$ such that 
$D_h-z_h^i\to D^i$  in $L^1_{\rm loc}(\R^N)$ and $\sum_i |D^i| =m$.  
This rules out the possibility that $G$ is a closed group which contains a line (otherwise $D_h-z_h^i$ would converge in $L^1_{\rm loc}(\R ^N)$ to $\emptyset$ by Theorem \ref{lemmaconvergence}), and hence $G$ belongs to $\mathcal G _m$.

By Remark \ref{r:lambda} (iii), 
we have that $\lambda(G_h)\to \lambda(G)>0$. Choosing $\delta>0$ such that $\lambda(G), \lambda(G_h)\geq \delta$ for every $h$, by \cite[Theorem 1]{ma}  there exists a   set of generators $v^i_h$ of $G_h$ such  that $\Pi_{i=1}^N |v^i_h|\ \leq C_N m$ and  $|v^i_h|\geq \delta$ for all $i$. 
Therefore there holds  $|v_h^i|\leq C_N m \delta^{1-N}:=R(m,\delta)$ for every $i$ and $h$. 
Now we  observe that  for every $i$, we may choose $g_h^i\in G_h$, with $|g_h^i-z_h^i|\leq\frac{\sqrt{N} R(m, \delta)}{2}$, 
so that $|g_h^i-g_h^j|\to +\infty$ as $h\to +\infty$, for $i\neq j$. 
Eventually passing to a subsequence,
 we get that  $D_h-g_h^i\to D^i$  in $L^1_{\rm loc}(\R^N)$.

If by contradiction $|D^i\cap D^j|>0$ for some  $i\neq j$, 
for $h$ sufficiently large we would have  
$|(D_h  - g_h ^ i) \cap (D_h-g_h^j)|>0$, against the fact that $D _h$ are fundamental domains for $G_h$. 
In a similar way, we obtain that 
 \begin{equation}\label{Q}|(D^i+g)\cap D^j|=0\qquad \text{$\forall g\in G \setminus \{  0 \}$, $\forall i,j$  }\end{equation} 
 (otherwise, writing  $g=\lim w_h$, for some $w_h\in G_h \setminus \{  0 \}$, for $h$ sufficiently large we would have  $|(D_h-g_h^i+w_h)\cap (D_h-g_h^j)|>0$).

Condition \eqref{Q}, together with the fact that $|D^i \cap D ^j| = 0$ for $i \neq j$ and with the equality $\sum_i |D^i|=m$,  implies that $D = \cup_i D^i$ is a fundamental domain for $G$ (see \cite[Lemma 2.2]{cnparti}).

%

The first inequality in (iv) follows from the submodularity property of fractional perimeter (see \cite[eq. (2)]{cnparti}); for the second inequality we refer to the proof of \cite[Lemma 3.4]{cnparti},   which continues to work with the unique modification that the elements  $g_h ^ i$ belong to $G_h$ in place of a fixed group $G$. \qed 

\bigskip 
 
{\bf  Proof of Theorem \ref{thex2} for fractional kernels.} 
 We consider a minimizing sequence of lattices $G_h\in\G_m$ and of fundamental domains $D_h$ for $G_h$, so that
 $$\lim _{h \to + \infty} \Per _K ( D_h) = \inf_{G\in\G_m,\,E\in\D_G} \Per_K(E)\,.
 $$ 
Since $D_h$ satisfy condition \eqref{unibound},
  we infer that there exist $g_h^i\in G_h$, measurable sets in $D^i\subset\R^N$, and a lattice $G \in \mathcal G_m$ as in Lemma \ref{lemmacc}. By using statement (iv) in the same lemma, we infer that  the lattice $G$ and its fundamental domain $D = \cup _i D _i$ 
solve the isoperimetric problem \eqref{iso2}. 

\qed

\section{Proofs of Theorems \ref{thex1} and \ref{thex2} for integrable kernels} 

In this section we prove Theorems \ref{thex1}  and \ref{thex2} under the assumption \eqref{int}. 

As already mentioned in the Introduction, when dealing with integrable kernels sequences of sets with uniformly bounded nonlocal perimeter are no longer precompact in $L ^ 1 _{\rm loc} (\R ^N)$. Therefore, we are led to relax the isoperimetric problems \eqref{iso1} and \eqref{iso2} by replacing fundamental domains by fundamental densities as introduced in Definition \ref{def:fundrel}, and accordingly by extending in a natural way the nonlocal perimeter.

In turn, we shall need to further enlarge the class of fundamental densities to  a class of functions where we are able to obtain a suitable version of the concentration-compacntess principle. 

%
%
%
%
%
%
%
%

\subsection{Relaxed isoperimetric problems} \label{sec1}

Let us introduce the following relaxed versions of the isoperimetric problems \eqref{iso1} and \eqref{iso2}: for a fixed lattice $G\in \mathcal \G_m$, we consider 
\begin{equation}\label{reliso1} \inf_{f\in {\mathcal A}_G} \mathcal{P}_K(f)\,,
 \end{equation}  
and then, we minimize among all lattices in $\G_m$ the previous energy 
 \begin{equation}\label{reliso2} \inf_{G\in\G_m} \inf_{f\in {\mathcal A}_G} \mathcal{P}_K(f)\,,
 \end{equation}    
 Here $\mathcal A _G$ denotes the class of fundamental densites,  according to Definition \ref{def:fundrel} and  $\mathcal{P}_K$ denotes the natural extension of the nonlocal perimeter to $L ^1$ functions,  as follows. 

  \begin{definition}\label{def:relenergy}
Given $f\in L^1(\R^N; [0,1])$, we set 
\begin{equation}\label{relaxed} 
\mathcal{P}_K(f):=\int_{\R^N}\!\!\int_{\R^N} f(x)[1-f(y)] K(x-y)dxdy
\end{equation}  
\end{definition}

In view of Lemma \ref{lemma1}, we are led to consider a larger class of relaxed fundamental densities,  which offers more stability  under passage to the limit due to the weaker constraints. 
\begin{definition}\label{def:relfundrel} Given $G \in \mathcal G _m$, we call  {\it relaxed fundamental density} for $G$ 
a function $f$ in  the class 
 \begin{equation}\label{cg} \overline{\mathcal{A}}_G :=\Big\{f\in L^1(\R^N; [0,1]) \ : \   \sum_{g\in G} f(x+g)\leq 1 \text{ a.e.} \Big\} \,,\end{equation}
   \end{definition} 

A property of the class $\overline{\mathcal{A}}_G$ which will be useful to our purposes is stated in the next result. 

 \begin{lemma}\label{lemma3}  
 Given $G \in \mathcal G _m$, 
for every $f\in \overline {\mathcal{A}}_G$, there exists $\Psi \in\mathcal{A}_G$ such that $ \Psi \geq f$ a.e.
\end{lemma}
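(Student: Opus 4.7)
The plan is to construct $\Psi$ explicitly by filling in the pointwise deficit of $f$ inside a single chosen fundamental domain. More precisely, let $D$ be any fundamental domain for $G$ (for instance, the Voronoi cell $V_G$) and define
\[
\Psi(x) := f(x) + \chi_D(x)\Bigl(1 - \sum_{g\in G} f(x+g)\Bigr).
\]
Because $f \in \overline{\mathcal A}_G$, the bracketed quantity $d(x) := 1 - \sum_{g\in G} f(x+g)$ is nonnegative a.e., so $\Psi \geq f$ a.e. is immediate.

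Next I would verify that $\Psi \in L^1(\R^N;[0,1])$. For $x\notin D$, $\Psi(x) = f(x) \in [0,1]$; for $x\in D$, one rewrites $\Psi(x) = 1 - \sum_{g\neq 0} f(x+g) \in [0,1]$. Integrability follows from
\[
\int_{\R^N} \Psi \, dx = \int_{\R^N} f\, dx + \int_D\! d(x)\, dx = \int_{\R^N} f\, dx + m - \int_D\sum_{g\in G} f(x+g)\, dx = m,
\]
where the last equality uses the monotone convergence/Fubini computation already recorded in the proof of Lemma \ref{lemma1}(i).

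Finally, I would check the tiling identity $\sum_{g\in G}\Psi(x+g) = 1$ a.e. For a.e.\ $x\in\R^N$ there is a unique $g_0\in G$ with $x+g_0 \in D$; then $\chi_D(x+g)$ vanishes for all $g\neq g_0$, so
\[
\sum_{g\in G}\Psi(x+g) = \sum_{g\in G} f(x+g) + d(x+g_0) = \sum_{g\in G} f(x+g) + 1 - \sum_{g\in G} f(x+g_0 + g) = 1,
\]
since the two sums coincide by reindexing the orbit $G$. Hence $\Psi \in \mathcal A_G$.

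The argument is essentially bookkeeping; the only point requiring mild care is measurability and the a.e.\ existence/uniqueness of the representative $g_0 \in G$ with $x+g_0\in D$, both of which follow from the standing properties of fundamental domains recalled in Definition \ref{def:fund}. I do not anticipate any genuine obstacle: the inequality constraint defining $\overline{\mathcal A}_G$ is exactly what makes the pointwise correction $\chi_D \cdot d$ nonnegative and bounded, so it can be added to $f$ without exceeding the bound $1$.
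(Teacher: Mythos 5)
Your proof is correct, and it takes a genuinely different route from the paper. The paper first produces, from any $f\in\overline{\mathcal{A}}_G\setminus\mathcal{A}_G$, some $\psi\geq f$ with strictly larger mass (mimicking Lemma~\ref{lemma1}(iii)), then invokes Zorn's lemma on the poset $\mathcal{F}$ of such dominating functions, and finally identifies the maximal element as the desired $\Psi\in\mathcal{A}_G$. Your argument sidesteps the transfinite machinery entirely: you exhibit $\Psi$ explicitly by adding the pointwise deficit $d(x)=1-\sum_{g}f(x+g)$ on a single fundamental domain $D$, and all three required properties ($\Psi\geq f$, $\Psi\in L^1(\R^N;[0,1])$, and $\sum_g\Psi(\cdot+g)=1$) follow by direct computation --- the reindexing of the orbit in the last step being the only point that needs a moment's thought, and you handle it correctly. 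Your construction is more elementary and fully constructive; the paper's Zorn argument is more abstract (and, incidentally, verifies the chain condition only for countable increasing sequences, so it tacitly relies on the separability of $L^1$ or on passing to essential suprema of chains). Overall your version is simpler and arguably preferable.
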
 
\begin{proof}
Let  $ f\in \overline{ \mathcal{A}}_G\setminus \mathcal A_G$. By Lemma \ref{lemma1} (iii), necessarily it must be  $\int_{\R^N}  f(x)<m$.  Moreover by Lemma \ref{lemma1} (ii), there exists a subset $E\subseteq \R^N$ with $|E|>0$ such that  $\sum_{g\in G} f(x+g)<1$ a.e. in $E$. 
By  arguing as in the proof of Lemma \ref{lemma1} (iii), we get that there exists $\psi \in \overline{\mathcal{A}}_G$ with $\psi  \geq  f$ and $\int_{\R^N}\psi  (x) \, dx > \int_{\R^N}  f(x) \, dx$. 
 Then   the set \[\mathcal{F}:=\Big \{ \psi \in \overline{\mathcal{A}}_G\ \text{ such that }  \psi \geq  f\text{ a.e. and } \int_{\R^N}\psi (x) \,dx > \int_{\R^N}  f(x) \, dx \Big \}\]
 is nonemtpy. The set $\mathcal{F}$ is partially ordered (with respect to inequality a.e.), and it turns out to contain an upper bound for every chain.  Actually, if $\psi_n\subset \mathcal{F}$ is an increasing sequence, we have  $\psi=\sup_n \psi_n\in \mathcal{F}$. Indeed, it is immediate that  $\psi \geq  f$ and, by monotone convergence,  $\int_{\R^N} \psi (x)dx\leq m$. Moreover, for every $G' \subseteq G$ finite, there holds $\sum_{g\in G'} \psi (x+g)=\sum_{g\in G'}\lim_n \psi _n(x+g)=\lim_n\sum_{g\in G'}\psi _n(x+g)\leq 1$ a.e., so that 
 $ \sum_{g\in G}   \psi  (x+g) = \sup_{G' \subseteq G, G' \text{finite}} \sum_{g\in G'} \psi (x+g)\leq 1$ a.e.
By applying Zorn lemma, we infer that $\mathcal{F}$ contains a maximal element $ \Psi$, which necessarily  satisfies $\int_{\R^N} \Psi (x)dx=m$.   \end{proof}

 \begin{remark}\label{change} 
 Since $K\in L^1(\R^N)$, setting  
$$\mathcal{J}_K(f)= \int_{\R^N}\!\!\int_{\R^N} f(x)f(y) K(x-y)dxdy\,, 
$$
for every $f\in L^1(\R^N; [0,1])$  there holds $\mathcal{P}_K(f)= \|f\|_1\|K\|_1- \mathcal{J}_K(f)$,   and hence  for $G\in\G_m$ there holds
$$
\inf_{f\in {\mathcal A}_G} \mathcal{P}_K(f) 
     = m \|K\|_1  - \sup_{f\in {\mathcal A}_G} \mathcal{J}_K(f) \,.$$  
\end{remark}

\begin{remark}\label{remfund} 
Since $K$ is nonnegative, the pointwise inequality $\Psi \geq f$ in Lemma \ref{lemma3}  implies that  $\mathcal{J}_K(\Psi) \geq \mathcal{J}_K(f)$.  
 It follows that, 
for every $G \in \G _m$, 
$$ \sup_{f\in {\mathcal A}_G} \mathcal{J}_K(f)   =   \sup_{f\in\overline{\mathcal A}_G} \mathcal{J}_K (f)\,; $$ 
moreover,  if the supremum of  $\mathcal{J}_K$ over  $\overline{\mathcal A}_G $ is attained, the same holds true for the supremum of $\mathcal{J}_K$  over ${\mathcal A}_G$. \end{remark} 

\subsection{Existence of an optimal fundamental density}  
 
\begin{proposition}\label{p:density} 
Let $K$ satisfy \eqref{int}. For every  $G \in \mathcal G _m$, the maximization problem
\begin{equation}\label{relmax} \sup_{f\in{ \mathcal A_G}} \mathcal{J}_K (f)\,. 
\end{equation}
admits a solution. 
\end{proposition}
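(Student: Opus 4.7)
The plan is to work in the equivalent (by Remark~\ref{remfund}) maximization problem $S := \sup_{f\in\overline{\mathcal{A}}_G} \mathcal{J}_K(f)$, exploiting the fact that $\overline{\mathcal{A}}_G$, unlike $\mathcal{A}_G$, is closed under weak-$*$ convergence in $L^\infty(\R^N)$. Observe that $S \in (0, m\|K\|_1]$: finiteness is clear, and $S>0$ because the $\liminf$-condition in \eqref{int} forces $K>0$ in a neighborhood of the origin, so $\mathcal{J}_K(\chi_D)>0$ for any fundamental domain $D$. By Lemma~\ref{lemma3}, I pick a maximizing sequence $(f_n)\subset\mathcal{A}_G$ with $\|f_n\|_1=m$.

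I then apply a concentration-compactness argument. First, \emph{vanishing} is ruled out: if $\sup_{y\in\R^N}\int_{B_R(y)} f_n \to 0$ for every $R>0$, splitting $K$ into a bounded compactly supported piece (truncated at height $M$ on $B_R$) and a remainder of small $L^1$-norm (both possible thanks to $K\in L^1$) gives $\|K*f_n\|_\infty\to 0$, hence $\mathcal{J}_K(f_n)\to 0$, contradicting $S>0$. So there exist $c,R>0$ and centers $y_n\in\R^N$ with $\int_{B_R(y_n)} f_n \geq c$; since the Voronoi cell of $G$ has bounded diameter, I may replace $y_n$ by the nearest element of $G$ at the cost of a bounded enlargement of $R$, and assume $y_n\in G$. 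Then $\tilde f_n := f_n(\cdot+y_n)\in\mathcal{A}_G$ retains mass $\geq c$ near the origin and, up to a subsequence, $\tilde f_n \rightharpoonup^* f^{(1)} \in \overline{\mathcal{A}}_G$ weakly-$*$ in $L^\infty(\R^N)$: the defining inequality of $\overline{\mathcal{A}}_G$ passes to the limit via finite truncations of $G$ and monotone convergence.

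If $\|f^{(1)}\|_1<m$ (dichotomy), I would iterate the same extraction on the residual mass, producing further pieces $f^{(i)}\in\overline{\mathcal{A}}_G$ --- each a weak-$*$ limit of $f_n(\cdot+y_n^{(i)})$ with $y_n^{(i)}\in G$ and $|y_n^{(i)}-y_n^{(j)}|\to\infty$ for $i\neq j$ --- with $\sum_i \|f^{(i)}\|_1 = m$. Crucially, since every $y_n^{(i)}$ lies in $G$, the torus projection $\sum_{g\in G} h(\cdot+g)$ is invariant under these translations, and passing to the weak-$*$ limit in the pointwise bound $\sum_i \sum_g f_n^{(i)}(x+g) \leq \sum_g f_n(x+g) = 1$ yields the fundamental estimate
\[
\sum_i \sum_{g\in G} f^{(i)}(x+g) \;\leq\; 1 \quad\text{a.e.}
\]

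Finally, I would recombine by setting $f:=\sum_i f^{(i)}$: the series converges in $L^1(\R^N)$ (norms sum to $m$), and the above inequality together with $\|f\|_1=m$ forces $f\in\mathcal{A}_G$ by Lemma~\ref{lemma1}(iii). Since $K\geq 0$, expanding the bilinear form gives
\[
\mathcal{J}_K(f) \;=\; \sum_i \mathcal{J}_K(f^{(i)}) \,+\, \sum_{i\neq j}\!\int\!\!\int f^{(i)}(x) f^{(j)}(y) K(x-y)\,dxdy \;\geq\; \sum_i \mathcal{J}_K(f^{(i)}).
\]
Continuity of $\mathcal{J}_K$ on tight translated weak-$*$ convergent sequences (using $K\in L^1$, the uniform bound $\|K*\tilde f_n^{(i)}\|_\infty \leq \|K\|_1$, and strong $L^p_{\mathrm{loc}}$-convergence of the convolutions) combined with the vanishing of cross interactions in $f_n$ (by $|y_n^{(i)}-y_n^{(j)}|\to\infty$ and $K\in L^1$) produces $\sum_i \mathcal{J}_K(f^{(i)}) = \lim_n \mathcal{J}_K(f_n) = S$. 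Hence $\mathcal{J}_K(f)\geq S$, so $\mathcal{J}_K(f)=S$ and $f$ attains the supremum. The principal difficulty is organizing the iterative decomposition and verifying that the infinite superposition lies in $\mathcal{A}_G$; this is made possible precisely because the translations $y_n^{(i)}$ are chosen in $G$ (rather than merely in $\R^N$), which preserves the torus-projection constraint under passage to the limit.
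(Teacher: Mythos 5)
Your proof is essentially correct, and the underlying mechanism is the same as the paper's: concentration-compactness adapted to fundamental densities, with the translations chosen in the lattice $G$ precisely so that the torus-projection constraint $\sum_{g\in G}(\cdot)(x+g)\leq 1$ passes to the limit, followed by a recombination of the weak-$*$ limits into a single $f\in\overline{\mathcal A}_G$ and the superadditivity $\mathcal J_K(f)\geq\sum_i\mathcal J_K(f^{(i)})$ coming from $K\geq 0$. The implementation, however, differs. You run the classical Lions machinery (vanishing ruled out via a truncation of $K$ and the bound $\mathcal J_K(f_n)\leq m\|K*f_n\|_\infty$; then an iterated dichotomy producing pieces $f^{(i)}$ with centers $y_n^{(i)}\in G$ separating to infinity). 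The paper instead tiles $\R^N$ by the Voronoi cells $Q^g$ of $G$, classifies cells by whether their $f_h$-mass exceeds $\eps$, groups the at most $m/\eps$ heavy cells into finitely many clusters of uniformly bounded diameter that separate as $h\to\infty$, obtains tight weak-$*$ convergence of the translated cluster pieces, and finally sends $\eps\to 0$ using a monotonicity-in-$\eps$ argument; the light cells are shown to contribute at most $r(\eps)\to 0$ to $\mathcal J_K$ via a Riesz-rearrangement estimate. The cluster approach is more hands-on and gives a finite decomposition at each scale $\eps$ with explicitly compact supports, which makes the tight-convergence step and the cross-term estimates transparent; your Lions formulation is more standard and streamlined, but leaves implicit the precise cutoffs defining $f_n^{(i)}$ (needed to make the inequality $\sum_i f_n^{(i)}\leq f_n$ literal) and the bookkeeping that guarantees no mass is lost between iterations. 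One small difference worth noting: the paper never explicitly needs $S>0$. Even if all mass vanished, its argument yields $\limsup_h\mathcal J_K(f_h)\leq\mathcal J_K(0)=0$, hence $S=0$, and Remark~\ref{remfund} still produces a maximizer in $\mathcal A_G$; you instead invoke $S>0$ (correctly, via the $\liminf$-condition in \eqref{int}) to exclude vanishing up front. Both routes are valid.
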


\proof  In view of Remark \ref{remfund} it is sufficient to show that the maximization problem \[  \sup_{f\in{\overline{\mathcal A}_G}} \mathcal{J}_K (f)\] admits a solution. 

Let $\{ f _h \}$ be a maximizing sequence. Without loss of generality, by Lemma \ref{lemma3} and Remark \ref{remfund},  we may assume $f_h\in \mathcal A_G$. Let $Q$ be the Voronoi cell of $G$.  For $g\in G$, denoting by  $Q^g:=g+Q$, we define
$$m_{h,g} := \int_{Q^g} f_h(x)dx\,.$$ 
Given $\eps>0$ small, we divide the family of cells $Q^g$ in two sub-families:  
\begin{eqnarray}
& \displaystyle I^ {\eps}_h:=\big \{g \in G \ :\ m_{h,g}\leq \eps \big \},\qquad A^{\eps}_h:=\cup_{g\in I^{\eps}_ h} Q^g
& \label{fam1} \\  \noalign{\bigskip} 
&  \displaystyle J^ {\eps} _h :=\big \{g\in G\ :\ m_{h,g}>\eps \big \},
\qquad E^{\eps}_{h}:=\cup_{g\in J^{\eps}_ h} Q^g. & \label{fam2}
\end{eqnarray}

Observe that, due to the fact that $\int_{\R^N} f_h(x)dx= m$, 
we have $\# J^{\eps}_h\leq \frac{m}{\eps}$.

Given $w_h^i, w_h^l\in J^{\eps}_h$, up to subsequences we have $|w_h^i-w_h^l|
\to  c_{il}\in\N\cup \{+\infty\}$ as $h\to +\infty$. 

We fix $i$ and consider the clusters defined by \begin{equation}\label{clu} \mathcal{Q}^{ i, \eps}_h= \bigcup \Big \{ Q^{w_{l}} \ : \ 
w_{l}\in J^{\eps}_ h,  \ c_{i l} <+\infty \Big \}. \end{equation}
Note that the total number  $H_\eps$ of such clusters is at most $ \frac{m}{\eps}$.  
 
By construction, we have 
\begin{equation}\label{clu1}{\rm dist}(\mathcal{Q}^{i,\eps}_h, \mathcal{Q}^{l,\eps}_h)\to +\infty  \text{ as }  h\to +\infty \, , \ \text{ for every }  i\neq l \,,  \end{equation}
and
\begin{equation}\label{clu2}{\rm diam} (\mathcal{Q}^{i,\eps}_h )\leq \sum_{l\in \{1, \dots H_\eps\}, c_{il}<\infty} (2c_{il}+2 d_G)\leq M_\eps,\end{equation}
where $d_G$  denotes the diameter of $Q$, and  $M_\eps$ does not depend on $h$.

Setting
$$f_h^{i,\eps}:=f_h \chi_{\mathcal{Q}^{i,\eps}_h} $$
and choosing $g_{h } ^ {i, \eps} \in G \cap{\mathcal{Q}^{i,\eps}_h} $
so that the support of $ f_h^{i,\eps}(\cdot+g_{h } ^ {i, \eps} )$ is contained in $B(0,M_\eps)$ for every $h$, 
up to subsequences  $ f_h^{i,\eps}(\cdot+g_{h } ^ {i, \eps} ) \rightharpoonup^* f ^ { i, \eps}$ in $L ^ \infty (\R ^N)$, with 
 $\|f_h^{i,\eps} \| _ 1 \to\|  f ^ { i, \eps} \|_1$. 
 
 We now observe that 
the families $\eps \mapsto J ^ {\eps} _h$ are monotone in $\eps$ (with respect to inclusion), 
and hence the same is true also for the clusters ${\mathcal{Q}^{i,\eps}_h}$. This implies that we can assume that
the elements $g_{h } ^ {i, \eps}$ are independent of $\eps$, so that in the sequel we denote them by $g_h ^i$. 

Now, 
also the functions 
$f_h^{i,\eps} (\cdot + g_h ^i)$ are monotone in $\eps$ (with respect to a.e. inequality), and
the same property is inherited by  their limits $f^{i,\eps}$.  
By monotone convergence we infer that, as $\eps \to 0$, the sequence $f^{i,\eps}$
converges to some function $f^i$  in $L^1 (\R^N)$.
   Let us check that the function $$f:= \sum _{i = 1} ^ H f ^ i\, , \qquad \text{ where } H:= 
\lim_{\eps \to 0 }  H _ { \eps}\in (0 , + \infty]$$  
belongs to $\overline { \mathcal A} _ G$. 
From the inequality $\sum _{i=1} ^ { H _ \eps}  f _h ^{i,\eps} \leq f _h$, we infer that
$$\sum _ {g\in G}  \sum _{i=1} ^ { H _\eps}  f _h ^ {i, \eps} (x+g) \leq 1 \qquad \text{ a.e. } $$
The same inequality remains true also replacing $f_h^{i, \eps}$ first by  $f_h^{i,\eps}(\cdot+g_{h} ^ { i, \eps})$ (since $g_{h} ^ { i, \eps}\in G$), 
then by $ f^{i,\eps}$
 (by the weak* lower semicontinuity of the $L ^ \infty$-norm), and finally by their  limits  $f ^ i$ (by $L ^1$ convergence). 
%
%
%
%
So, we conclude that $ f \in \overline{\mathcal A} _ G$. 

\smallskip 
Let us prove that
$$\mathcal J _K (f) \geq \sum _{i= 1}^ {H _\eps}   \mathcal J _K (f^i ) \geq \limsup _h \mathcal J _K ( f _h)\,.$$

 Since $K\in L^1(\R ^N)$,   also its symmetric decreasing rearrangement $K^\star$ belongs to $L^1 (\R^N)$. Hence we can choose $R=R(\eps)$ in such a way that 
\begin{equation}\label{choiceR}
\lim_{\eps\to 0}R(\eps)=+\infty \quad \text{ and }\quad
\lim_{\eps\to 0}R(\eps)^N\int_{B_\eps} K^\star(y)dy =0.
\end{equation}
We claim that, with this choice of $R$, we have
\begin{equation}\label{aeps} \int_{A^{\eps} _h}\int_{\R^N} f_h(x)f_h(y) K(x-y)dxdy\leq r(\eps)\quad \text{ with $r(\eps)\to 0$ as $\eps\to 0$, uniformly in $h$.}\end{equation}  
Indeed, we have 
\begin{equation}\label{dueadd}\begin{array}{ll} 
\displaystyle \int_{A^{\eps}_ h}\int_{\R^N} f_h(x)f_h(y) K(x-y)dxdy =
& \!\!\!\!\!\!\!\!  \!\!\!\!\!\!\!\! \displaystyle \sum_{w\in G, g\in I^{\eps} _h, |g-w|\leq R} \int_{Q^g}\int_{Q^w} f_h(x)f_h(y) K(x-y)dxdy + 
\\  \noalign{\bigskip}
& \!\!\!\!\!\!\!\!  \!\!\!\!\!\!\!\!   \displaystyle \sum_{w\in G, g \in I^{\eps}_ h, |g-w|>R} \int_{Q^g}\int_{Q^w} f_h(x)f_h(y) K(x-y)dxdy\,.
\end{array}
\end{equation}
For every $w\in G$  and $g\in I^{\eps}_h$, it holds 
\begin{multline}\int_{Q^g}\int_{Q^w} f_h(x)f_h(y) K(x-y)dxdy \leq \int_{B_{m_{h,w}}}\int_{B_{m_{h,g}}} K^\star(x-y)dxdy\\= \int_{B_{m_{h,w}}}\int_{B_{m_{h,g}}(x)} K^\star(y)dydx\leq m_{h,w}\int_{B_{m_{h,g}}} K^\star(y)dy\leq m_{h,w}\int_{B_\eps} K^\star(y)dy,\end{multline} where we have used in the order 
Riesz rearrangement inequality, a change of variable, 
the fact that $g\in I^{\eps}_h$, and the 
fact that $K^\star$ is symmetrically decreasing. 
Hence, we have the following bound for the first addendum in \eqref{dueadd}:
$$\begin{array}{ll} \displaystyle \sum_{w\in G, g\in I^{\eps} _ h, |g-w|\leq R} \int_{Q^g}\int_{Q^w} f_h(x)f_h(y) K(x-y)dxdy
& \displaystyle \leq \sum_{w\in G} m_{h,w}(2R)^N \int_{B_\eps} K^\star(y)dy \\ 
& \displaystyle \leq m\,  (2R)^N\!  \int_{B_\eps} K^\star(y)dy. \end{array}
$$ 
On the other hand, the second addendum in \eqref{dueadd} can be estimated as
$$\begin{array}{ll}  \displaystyle \sum_{w\in G, g\in I^{\eps}_h, |g-w|>R} \int_{Q^g}\int_{Q^w} f_h(x)f_h(y) 
K(x-y)dxdy 
&
\leq \displaystyle  \sum_{g\in I^ {\eps}  _h }m_{h, g}\int_{|y|>R-2d_G}    K(y) dy
\\ 
&\displaystyle  \leq 
m\int_{|y|>R-2d_G}  K(y) dy.\end{array}
$$ 
By combining the two above estimates, we obtain 
\begin{equation}\label{eqRR}
\int_{A^{\eps} _ h}\int_{\R^N} f_h(x)f_h(y) K(x-y)dxdy\\\le
m (2R)^N \int_{B_\eps} K^\star(y)dy + m\int_{|y|>R-2d_G}  K(y) dy, 
\end{equation}
so that  \eqref{aeps} holds true thanks to the choice of $R = R (\eps)$. 

 
Now, by using  \eqref{aeps} we have 
%
$$\begin{array}{ll} \limsup_h\mathcal{J}_K(f_h) &\displaystyle \leq \limsup_h  \int_{E^{\eps} _ h }\int_{E^{\eps} _ h}f_h (x)f_h (y)K(x-y)dxdy + 2r(\eps)\\ \noalign{\medskip} 
& \displaystyle \leq  \sum_{i=1}^{H_\eps} \limsup_h  \int_{\R^N}\!\!\int_{\R^N}f_h^{i,\eps}(x+g_h^{i,\eps})f_h^{i,\eps}(y+g_h^{i,\eps}) K(x-y)dxdy+ 2r(\eps)\\ \noalign{\medskip} 
& \displaystyle =   \sum_{i=1}^{H_\eps} \int_{\R^N}\!\!\int_{\R^N}f^{i,\eps}(x)f^{i,\eps}(y)K(x-y)dxdy+2r(\eps)\,, 
\end{array}
$$ 
where the last equality follows from the fact that $\mathcal{J}_K$ is continuous with respect to the  tight  convergence (see for instance \cite{cdnp}). 

Finally, we pass to the limit as $\eps \to 0$. Thanks to the continuity of $\mathcal J _ K$ with respect to the
$L^1$-convergence,   we obtain
$$ \limsup_h\mathcal{J}_K(f_h)  
 \leq  \sum_{i=1}^{H } \int_{\R^N}\!\!\int_{\R^N}f^{i}(x)f^{i}(y)K(x-y)dxdy 
\leq \int_{\R^N}\!\!\int_{\R^N} f (x)  f (y)K(x-y)dxdy\,.
 $$ \qed 

  \subsection{Optimality conditions}
  
  In this section we derive first and second order optimality conditions satisfied by 
an optimal fundamental density given by Proposition \ref{p:density}. They will be exploited to prove that   optimal fundamental densities are in fact   characteristic functions of  optimal fundamental domains.
First of all,  we associate with any fixed function $f\in \mathcal{A}_G$, with $G\in \G_m$,   the following potential   
   \begin{equation}\label{pot}
V(x):=\int_{\R^N} f(y)K(x-y)dy. \end{equation} 
It turns out that 
\begin{itemize}
\item[(i)]  $V\in C(\R^N)\cap L^1(\R^N)\cap L^\infty(\R^N)$, with $0<V\leq \|K\|_{L^1}$ and $\|V\|_{L^1}=m\|K\|_{L^1}$.  
\item[(ii)] $\lim_{|x|\to +\infty}V(x)=0$. 
\item[(iii)]  $\sum_{g\in G} V(x+g)= \|K\|_{L^1}$  for all $x\in \R^N$. 
\end{itemize} 
 For items (i)-(ii), see \cite[Proposition 5.2]{cn}, while the last item is an immediate consequence of condition  $\sum_{g\in G} f(x+g)=1$. 
 
Let us also denote by $Q$ a bounded fundamental domain of $G$, by $Q'$ the set of Lebesgue points of $f$ 
in $Q$, and let us introduce the following sets:  
  \begin{equation}\label{sets}S=\{x\in Q' \ :\  f(x)=1\}\, , \quad N=\{x\in Q'\ :\  f(x)=0\}\, , \quad  D=Q'\setminus (S \cup N)\,. 
  \end{equation}
 
We point out  that,  in view of the equality $ \sum_{g\in G} f(x+g)=1$, we have
\begin{eqnarray}
x \in D  \ \Rightarrow\  \exists \,  g\in G \, , \  x + g \in D\,.&\label{sposto1} 
\\ 
x \in S  \ \Rightarrow\  \forall g \in G\, , \  x + g \in N\,.&\label{sposto2} 
\\ 
x \in N  \ \Rightarrow\  \exists\,  g \in G\, , \  x + g \not \in N\,.&\label{sposto3} 
\end{eqnarray} 

\begin{lemma}\label{lemmavar}
Let   $f\in\mathcal{A}_G$ be an optimal fundamental density given by Proposition \ref{p:density}. Associate with $f$ the potential  $V$ 
 as in  \eqref{pot}, and the sets  $D$, $N$, $S$ as in \eqref{sets}.  

(i) For every function  
$\psi$ such that  $f+\lambda \psi\in \mathcal{A}_G$ for $\lambda >0$ sufficiently small, it holds 
\begin{equation}\label{first}  \int_{\R^N} \psi(y)V(y)dy \leq 0\, . \end{equation}    
Consequently,  the potential $V$  satisfies
\begin{eqnarray}&  V(x)= V(x+g)  \qquad \forall x\in D\, , \ \forall  g \in G \  :\ x + g \in D\, ; \quad & \label{D}  
 \\ 
& V(x)\geq   V(x+g) \qquad \forall x\in S\, ,  \  \forall g\in G\, ;\, \qquad \qquad\qquad \quad& \label{S}  
\\
 &  V(x) \leq  V(x+g)  \qquad \forall x\in N\, , \  \forall g  \in G \ : \ x+g \not \in N\, . \quad & \label{N}  
 \end{eqnarray}

(ii)  For every function  
$\psi$ such that  $f+\lambda \psi\in \mathcal{A}_G$ for $\lambda\in (-1,1)$, $|\lambda|$ sufficiently small, it holds
\begin{equation}\label{second}  
 \int_{\R^{2N}} K(x-y)\psi(x)\psi(y)dxdy\leq 0  \,. 
\end{equation}

%
%
%
\end{lemma}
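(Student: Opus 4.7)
My plan rests on the observation that the functional $\mathcal J_K$ is quadratic, so that, for every measurable $\psi$ and every $\lambda\in\R$, the symmetry of $K$ and the definition of $V$ in \eqref{pot} yield the exact expansion
\begin{equation*}
\mathcal J_K(f+\lambda\psi) = \mathcal J_K(f) + 2\lambda\int_{\R^N}\psi(y)V(y)\,dy + \lambda^2\int_{\R^{2N}} K(x-y)\psi(x)\psi(y)\,dx\,dy.
\end{equation*}
By Remark \ref{change}, an optimal fundamental density for $\mathcal P_K$ is a maximizer of $\mathcal J_K$ over $\mathcal A_G$; hence, if $f+\lambda\psi\in\mathcal A_G$ for all $\lambda>0$ sufficiently small, the left-hand side is at most $\mathcal J_K(f)$, and dividing by $\lambda$ and sending $\lambda\to 0^+$ immediately gives \eqref{first}. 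If in addition the variation is two-sided, i.e.\ $f+\lambda\psi\in\mathcal A_G$ for all $\lambda$ in a symmetric neighborhood of $0$, applying \eqref{first} to $\pm\psi$ forces $\int\psi V\,dy = 0$, so that the expansion reduces to $\mathcal J_K(f+\lambda\psi)-\mathcal J_K(f) = \lambda^2\int\!\!\int K\psi\psi$, and maximality forces this quadratic term to be nonpositive. This proves the variational inequalities \eqref{first} and \eqref{second}.

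The pointwise conditions \eqref{D}, \eqref{S}, \eqref{N} are then extracted from \eqref{first} by testing with functions of the form $\psi := \chi_A-\chi_{A+g}$, where $A$ is a small measurable subset of $Q'$ and $g\in G$. The key observation is that such $\psi$ automatically satisfies $\sum_{g'\in G}\psi(\cdot+g')\equiv 0$ (by reindexing the sum $g'\mapsto g'-g$), so that the admissibility of $f+\lambda\psi$ inside $\mathcal A_G$ reduces to the pointwise bounds $0\leq f+\lambda\psi\leq 1$. For \eqref{D}, when $x\in D$ and $x+g\in D$, Lebesgue differentiation supplies a shrinking family $A_r$ of sets around $x$, essentially contained in $D$ with $A_r+g$ essentially contained in $D$ as well, on which $f$ is bounded away from $0$ and $1$; the variation $\psi=\chi_{A_r}-\chi_{A_r+g}$ is thus admissible for $\lambda$ in a symmetric interval, and \eqref{first} yields $\int_{A_r}[V(y)-V(y+g)]\,dy = 0$, so by continuity of $V$ and as $r\to 0$ we obtain $V(x)=V(x+g)$. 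For \eqref{S}, when $x\in S$, by \eqref{sposto2} any translate $x+g$ with $g\neq 0$ lies in $N$ (the case $g=0$ being trivial), so a set $A_r\subset S$ around $x$ forces $f=1$ on $A_r$ and $f=0$ on $A_r+g$; only the variation $\psi=-\chi_{A_r}+\chi_{A_r+g}$ is admissible for $\lambda>0$ small, and \eqref{first} gives $-\int_{A_r}V+\int_{A_r+g}V\leq 0$, i.e.\ $V(x+g)\leq V(x)$ in the limit. The condition \eqref{N} follows by the symmetric argument: if $x\in N$ and $x+g\notin N$, by \eqref{sposto3} and Lebesgue differentiation one picks $A_r\subset N$ around $x$ with $A_r+g$ essentially contained in a set where $f\geq \delta>0$, so only $\psi=\chi_{A_r}-\chi_{A_r+g}$ is admissible for $\lambda>0$ small, and \eqref{first} gives $V(x)\leq V(x+g)$.

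The main technical point is the careful selection of the test sets $A_r$: one has to invoke Lebesgue differentiation simultaneously at $x$ and at $x+g$, in order to guarantee both that $A_r+g$ lies, up to negligible error, in the correct element of the trichotomy $\{S,N,D\}$, and that the values of $f$ on $A_r$ and $A_r+g$ provide the required two-sided or one-sided admissibility range of $\lambda$. The continuity of $V$ stated in property (i) just before the lemma is then what promotes the averaged equalities and inequalities coming from \eqref{first} into the pointwise statements \eqref{D}--\eqref{N} at every point of the prescribed sets.
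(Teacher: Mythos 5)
Your proposal is correct and follows essentially the same route as the paper: quadratic expansion of $\mathcal J_K(f+\lambda\psi)$, one-sided (respectively two-sided) variations to extract the first- (respectively second-) order conditions, and test functions of the form $\chi_{A}-\chi_{A+g}$ with $A$ a small set near a Lebesgue point, using continuity of $V$ to pass from averaged to pointwise inequalities. You add useful detail the paper leaves implicit, namely that the sum constraint in $\mathcal{A}_G$ is preserved automatically (since $\sum_{g'\in G}\psi(\cdot+g')\equiv 0$ by reindexing) so that admissibility reduces to the pointwise bounds $0\le f+\lambda\psi\le 1$, and that Lebesgue differentiation at both $x$ and $x+g$ supplies the required test sets.
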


\proof By maximality of $f$, exploiting the symmetry of $K$ and the definition of $V$, we get  
 \begin{equation}\label{max}0\geq  \mathcal{J}_K(f+\lambda \psi)- \mathcal{J}_K(f)= 2\lambda\int_{\R^N} \psi(x)V(x)dx+ \lambda^2\int_{\R^{2N}} K(x-y)\psi(x)\psi(y)dxdy.\end{equation}
 which implies conditions \eqref{first} and \eqref{second}.

To deduce the pointwise relations \eqref{D}-\eqref{S}-\eqref{N} from condition \eqref{first},  we choose some particular functions $\psi$. 
 
Let $ x\in D$, and let $g\in G$ be such that $x + g \in D$ (which exists by \eqref{sposto1}). 
Then, for $r>0$ sufficiently small,  there exists a set $E_r(x)$ of positive measure with $x \in E _ r (x)\subset B _ r (x)$, such that the function $\psi (y) := \chi _{E_r (x)} - \chi_ {E _ r (x) + g}$ satisfies the condition $f + \lambda \psi \in \mathcal A _G$ for $|\lambda| $ sufficiently small.  Writing the inequality \eqref{first} for  $\pm \psi$, we infer that 
$$ \int_{E _ r (x)} [ V (y) - V ( y +g) ]\, dy  = 0\,,$$ 
which implies \eqref{D} dividing by $|E _ r ( x) |$ and letting $r \to 0 ^+$. 
%
 
Let  $x\in S$,  and let $g \in G$ be arbitrary. Taking into account \eqref{sposto2}, for $r>0$ sufficiently small 
there exists a set $E_r(x)$ of positive measure with $x \in E _ r (x)\subset B _ r (x)$, such that the function
$\psi (y) = - \chi _{E _ r (x) } + \chi_ {E _ r ( x )+ g}$ satisfies the condition $f + \lambda \psi \in \mathcal A _G$ for $\lambda >0$ sufficiently small. 
Writing the inequality \eqref{first} for  $\psi$, we infer that 
$$\int_{E _ r (x)} [- V (y) + V ( y +g) ]\, dy  \leq 0\,,$$ 
which implies \eqref{S} dividing by $|E _ r ( x) |$ and  letting $r \to 0 ^+$.


Let  $x\in N$,  and let $g \in G$ be such that $x + g \not \in N$ (which exists by \eqref{sposto3}).  
For $r>0$ sufficiently small, 
there exists a set $E_r(x)$ of positive measure with $x \in E _ r (x)\subset B _ r (x)$, such that the function 
$\psi (y) =  \chi _{E _ r (x) } - \chi_ {E _ r ( x) + g}$ satisfies the condition $f + \lambda \psi \in \mathcal A _G$ for $\lambda >0$ sufficiently small. 
Writing the inequality \eqref{first} for  $\psi$, we infer that 
$$\int_{E _ r (x)} [ V (y)  - V( y +g) ]\, dy  \leq 0\,,$$ 
which implies \eqref{N} dividing by $|E _ r ( x) |$  and  letting $r \to 0 ^+$.  \qed


 \bigskip

\subsection{Existence of an optimal fundamental domain} 
Using Proposition \ref{p:density} and Lemma \ref{lemmavar} we are ready to prove the 
existence of an optimal fundamental domain, among fundamental domains for a fixed lattice $G\in\G_m$. 

\begin{proof}[Proof of Theorem \ref{thex1}]
In view of Remark \ref{change}, it is enough to show that 
a solution  $f\in \mathcal{A}_G$ to the maximization problem \eqref{relmax}, which exists by Proposition \ref{p:density}, 
is necessarily the characteristic function of a bounded set $E$.  

We claim first of all that $f$ is compactly supported. We argue by contradiction. If $\text{supp} f$ is not bounded, there exists a sequence $x_h$ of Lebesgue points for $f$, with $|x_h|\to +\infty$,  such that $f(x_h)>0$. For every $h$, let $g_h\in G$ such that $x_h+g_h\in Q'$, where $Q'$ denotes as above the set of Lebesgue points of $f$ in a convex fundamental domain $Q$. Then, 
keeping the same notation as  in Lemma \ref{lemmavar} we have that,  for every $h$,   either 
$x_h+g_h\in D$ or $x_h+g_h\in S$.

In the first case, we get by \eqref{D} that $V(x_h)= V(x_h+g_h) $; in the second case, we get by \eqref{S}  that  $V(x_h)\geq V(x_h+g_h)$. In both cases, since $x _h + g _h \in Q$, it follows that $V(x_h)\geq v:=\min_{x\in \overline Q} V(x)>0$. By the strict positivity of $V$, we have that $v>0$. 
So $V ( x_h ) \geq v >0$, but this contradicts the fact that  $|x _h | \to+ \infty$,  since $\lim_{|x|\to +\infty} V(x)=0$.

Next let us prove that $f$ is the characteristic function of a set $E$. 
Assume by contradiction that $D\neq \emptyset$. 
For $ x\in D$, we proceed in a similar way as done  in the proof of Lemma \ref{lemmavar}. We select $g\in G$ such that $x + g \in D$ (which exists by \eqref{sposto1}). Then,  for $r>0$ sufficiently small,  
  we consider
a set $E_r(x)$ of positive measure, with $x \in E _ r (x)\subset B _ r (x)$, such that the function 
 $\psi (y) := \chi _{E _ r (x) } - \chi_ {E _ r(x)+ g}$, 
satisfies the condition  $f + \lambda \psi \in \mathcal A _G$ for $|\lambda|$ sufficiently small.  Writing the inequality
\eqref{second} for such a function $\psi$, we obtain 
\[ 
\int_{E_r(x)}\int_{E_r(x)} [K(t-y)-K(t-y+g)]  dydt\leq 0.
\]
This contradicts  the last condition on $K$ appearing in \eqref{int}. Indeed, such condition implies that,  for $r>0$ sufficiently small, there 
exists $\eps>0$ such that, for every $t,y\in B_r(x)$, since $|t-y|\leq 2r$,  it holds 
$K(t-y)-K(t-y+g)>\eps>0$.
 \end{proof}

\medskip
\begin{remark}\label{r:bounded} Note that the above proof shows in particular that, when $K$ satisfies assumption \eqref{int}, an optimal fundamental domain is necessarily bounded. \end{remark} 
\medskip

\subsection{Nondegeneracy of lattices}  
In order to pass to the minimization problem \eqref{iso2}, we need the following result,  which establishes that the possible degeneracy  of a sequences of lattices in $\mathcal G _m$ in the statement of Theorem \ref{lemmaconvergence} is ruled out 
whenever  a sequence of associated  fundamental densities has Riesz energy uniformly bounded away from $0$. 
\begin{lemma}\label{lemmacompattezzaint} 
Let $G_h$ be a sequence of lattices in $\mathcal G _m$. Assume that for every $h$  there exists $f_h\in \mathcal{A}_{G_h}$ such that 
\begin{equation} \label{condizione1}   \inf_h \mathcal J_K(f_h) >0.\end{equation}
Then, up to subsequences,  $G_h$ converges in the Kuratowski sense to  a lattice in $\mathcal{G}_m$.
\end{lemma}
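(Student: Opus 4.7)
My plan is to argue by contradiction and reduce the statement to a direct application of Theorem \ref{lemmaconvergence} via an autocorrelation trick. First, by Theorem \ref{lemmaconvergence}, up to a subsequence $G_h$ converges in the Kuratowski sense to a closed subgroup $G$ of $\R^N$, with $G$ either in $\G_m$ (the desired case) or containing a line. I would assume the second alternative and use the energy lower bound \eqref{condizione1} to derive a contradiction.

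The key auxiliary object is the autocorrelation
\[
h_h(z) := \int_{\R^N} f_h(x)\, f_h(x+z)\, dx.
\]
By Fubini and the symmetry of $K$ one has $\mathcal J_K(f_h) = \int_{\R^N} K(z)\, h_h(z)\, dz$. I would next verify that $h_h/m$ is itself a fundamental density for $G_h$. Indeed, since $f_h \leq 1$ and $\int f_h = m$, one gets $0 \leq h_h \leq m$; Fubini yields $\|h_h\|_1 = m^2$; and the periodization identity
\[
\sum_{g \in G_h} h_h(z+g) = \int_{\R^N} f_h(x) \sum_{g\in G_h} f_h(x+z+g)\, dx = \int_{\R^N} f_h(x)\, dx = m
\]
follows from $f_h \in \mathcal A_{G_h}$. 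Hence $h_h/m \in \mathcal A_{G_h}$.

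At this point Theorem \ref{lemmaconvergence} applies directly to the sequence $h_h/m$: since the limit group $G$ contains a line, the theorem forces $h_h/m \rightharpoonup^{\ast} 0$ in $L^\infty(\R^N)$, and hence $h_h \rightharpoonup^{\ast} 0$. Testing this weak-$\ast$ convergence against $K \in L^1(\R^N)$ gives $\mathcal J_K(f_h) = \int K\, h_h\, dz \to 0$, contradicting \eqref{condizione1}. Thus $G \in \G_m$, as desired.

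The only conceptual step is recognizing that the rescaled autocorrelation of a fundamental density is again a fundamental density; once this is noticed, no further quantitative concentration analysis is required, so I do not anticipate a serious obstacle in executing the plan.
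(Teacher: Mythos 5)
Your proof is correct, and it takes a genuinely different and shorter route than the paper's. The paper rules out the degenerate alternative in Theorem \ref{lemmaconvergence} by rerunning the concentration--compactness machinery of Proposition \ref{p:density}: partition $\R^N$ into unit cubes, classify cubes by mass, form clusters of nearby heavy cubes, extract weak$^*$ limits of the restricted densities $f_h^{i,\eps}$, and use tight continuity of $\mathcal J_K$ to locate at least one cluster profile $f^{i,\eps}$ with positive Riesz energy, whence $f_h$ cannot vanish weak$^*$. Your argument bypasses all of this through the autocorrelation $h_h = f_h * \check f_h$. The observation that $h_h/m \in \mathcal A_{G_h}$ is correct: positivity and the bound $h_h \le \|f_h\|_1 = m$ follow from $0 \le f_h \le 1$; $\|h_h\|_1 = m^2$ is Tonelli; and $\sum_{g\in G_h} h_h(z+g) = \int f_h(x)\sum_{g} f_h(x+z+g)\,dx = m$ holds for every $z$ since translating the full-measure set on which $\sum_g f_h(\cdot+g)=1$ preserves full measure. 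Then $\mathcal J_K(f_h) = \int K\, h_h$ by the change of variable $y = x+z$ and the symmetry of $K$, and since the degenerate case of Theorem \ref{lemmaconvergence} forces $h_h/m \rightharpoonup^* 0$ in $L^\infty$ and $K \in L^1$, you get $\mathcal J_K(f_h) \to 0$, contradicting \eqref{condizione1}. What each approach buys: yours is much leaner and exposes the clean structural fact that $\mathcal J_K(f_h)$ is the $L^1$--$L^\infty$ pairing of $K$ with a fundamental density, so non-degeneracy of the lattice is forced directly by non-vanishing energy; the paper's construction, while heavier, produces the limiting cluster decomposition explicitly, which is needed anyway in the subsequent proof of Theorem \ref{thex2} where one must identify the limiting density, so the authors lose little by reusing it here.
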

\begin{proof} By Theorem \ref{lemmaconvergence}, up to subsequences $G_h\to G$ in the Kuratowski sense, where $G$ is either   a lattice  in $\mathcal{G}_m$  or a closed group which contains a line. Moreover, in order to exclude the second possibility, it is enough to show that $f_h$ cannot converge to $0$ weakly$*$ in $L^\infty  (\R ^N)$. 
We proceed in a similar way as in the proof of Proposition \ref{p:density}. 
 In place of choosing $Q$ as  a bounded fundamental domain associated with some lattice, we  choose 
$Q=[0,1]^N$ and, for $z\in \Z^N$, we set  
$$m_{h, z} := \int_{Q+z} f_h(x)dx\,. $$  
Then, given $\eps>0$ small, we define $I ^ \eps_h$ and $J ^ \eps _h$ as in \eqref{fam1} and \eqref{fam2}, with $\Z ^N$ in place of the lattice $G$, and
we divide  the family of cells $Q+z$ in the  two sub-families
$A ^ \eps _h$ and $E ^ \eps _h$, with $m _{h,z}$ in place of $m_{h, g}$. 
Again, due to the fact that $\int_{\R^N} f_h(x)dx= m$, 
we have $\# J^{\eps}_h\leq m/\eps$. 

Then we proceed by defining the clusters  $\mathcal{Q}^{ i, \eps}_h$ as  in \eqref{clu}. By construction, conditions \eqref{clu1} and \eqref{clu2} hold. 
Setting
\begin{equation}\label{fieps} f_h^{i,\eps}:=f_h \chi_{\mathcal{Q}^{i,\eps}_h} 
\end{equation}
and choosing $z_{h } ^ {i, \eps} \in G \cap{\mathcal{Q}^{i,\eps}_h} $
so that the support of $ f_h^{i,\eps}(\cdot+z_{h } ^ {i, \eps} )$ is contained into $B(0,M_\eps)$ for every $h$, 
up to subsequences,  $f_h^{i, \eps} (\cdot+z_h^{i,\eps})\stackrel{*}\rightharpoonup f^{i, \eps}$ weakly$^*$  in $L^\infty$ and  
$\| f_h^{i, \eps} \|_1 \to \| f^{i, \eps}  \|_1$.

By repeating the same estimates as in the proof of Proposition \ref{p:density}, we arrive at the inequality \eqref{aeps}, which in turn implies 
\begin{equation}\label{ene1} \mathcal{J}_K(f_h) \leq  \int_{E^{\eps} _ h }\int_{E^{\eps} _ h}f_h (x)f_h (y)K(x-y)dxdy + 2r(\eps)\end{equation} 
By combining \eqref{ene1} with assumption \eqref{condizione1}, we infer that, 
for $\eps>0$ sufficiently small,
 \[  \inf_h \int_{E^{\eps} _ h }\int_{E^{\eps} _ h}f_h (x)f_h (y)K(x-y)dxdy>0\,. \] 
Hence, we have
\[\inf_h  \sum_{i=1}^{H_\eps}\mathcal J _K ( f_h^{i, \eps} )  > \inf_h  \int_{E^{\eps} _ h }\int_{E^{\eps} _ h}f_h (x)f_h (y)K(x-y)dxdy>0\,. \] 
Finally, we invoke the  continuity of  $\mathcal J _ K$ with respect to the tight convergence, which implies 
\[  \sum_{i=1}^{H_\eps} \mathcal{J}_K( f^{ i, \eps}) = \lim_h  \sum_{i=1}^{H_\eps}\mathcal{J}_K( f_h^{ i, \eps}) >0\, . \] 
We conclude that that exists at least one index $i\in 1, \dots H_\eps$ such that $  \mathcal{J}_K( f^{i, \eps})>0$. 
So, since $f _h\geq f _h ^ {i, \eps}$, the sequence $f_h$ cannot converge to $0$ weakly$^*$ in  $L^\infty (\R ^N)$.  
\end{proof}

\subsection{Existence of a minimal lattice tiling} 

We conclude this section providing the solution to the minimization problem \eqref{iso2}. 
\begin{proof}[Proof of Theorem \ref{thex2}]


By Remark \ref{change},  solving the minimization problem \eqref{iso2} is equivalent to solving the maximization problem 
\[ \sup_{G\in \G_m,\ f\in {\mathcal A}_G} \mathcal{J}_K(f).\]We consider a maximizing sequence of lattices $G_h\in\G_m$ and of fundamental densities $f_h$ for $G_h$, so that
\[\lim _{h \to + \infty} \mathcal{J}_K ( f_h) = \sup_{G\in \G_m,\ f\in {\mathcal A}_G} \mathcal{J}_K(f)>0.\]
This implies in particular that condition \eqref{condizione1} holds. So, by Lemma \ref{lemmacompattezzaint}, 
up to passing to a subsequence,  $G_h$ converge in the Kuratowski sense to a lattice $G \in \mathcal G_m$. 
  
We proceed as in the proof of  Proposition \ref{p:density} and of Lemma \ref{lemmacompattezzaint}. We fix $Q=[0,1]^N$ and  we construct the clusters $\mathcal{Q}^{i,\eps}_h$, we define $f_h ^ {i, \eps}$ as in \eqref{fieps}, and we choose 
$z_h^{i, \eps}\in \mathcal{Q}^{i,\eps}_h\cap \Z^N $,  such that  up to subsequences,  
$f_h^{i, \eps} (\cdot+z_h^{i,\eps})\stackrel{*}\rightharpoonup f^{i, \eps}$ weakly$^*$  in $L^\infty(\R^N)$, with  
$\| f_h^{i, \eps}  \|_1 \to \| f^{i, \eps}  \|_1$.

By arguing as in the proof of Lemma \ref{lemmacc}, we can replace $z_h^{i,\eps}$ by $g_h^{i,\eps} \in G_h$ . 
Indeed, since $\lambda(G_h)\to \lambda(G)>0$,   we can choose $g_h^{i,\eps} \in G_h$, with $|g_h^{i,\eps}-z_h^{i,\eps}|$ uniformly bounded in $h$ and $\eps$, so that $|g_h^{i,\eps}-g_h^{j,\eps}|\to +\infty$ as $h\to +\infty$, for $i\neq j$. So, $f_h^{i, \eps} (\cdot+g_h^{i,\eps})\stackrel{*}\rightharpoonup f^{i, \eps}$ weakly$^*$  in $L^\infty (\R ^N)$.  

Moreover,  by arguing as in the proof of Proposition \ref{p:density}, we can pass to the limit also  as $\eps \to 0$: for every $i$,  the sequence $f^{i,\eps}$
admits a limit $f^i$  in $L^1 (\R^N)$, and the function 
$f:= \sum _{i = 1} ^ H f ^ i$  (with $H:= 
\lim_{\eps \to 0 }  H _ { \eps}\in (0 , + \infty]$) turns out to  
belong to $\overline { \mathcal A} _ G$. 
 
 Arguing as in Proposition \ref{p:density}, we arrive at the inequality \eqref{aeps}, which in turn implies 
\[\mathcal{J}_K(f_h) \leq  \int_{E^{\eps} _ h }\int_{E^{\eps} _ h}f_h (x)f_h (y)K(x-y)dxdy + 2r(\eps).\]

By using  the continuity of  $\mathcal J _ K$ with respect to the tight convergence,  passing to the limit as $h\to +\infty$, we obtain 
$$\lim_{h }\mathcal{J}_K(f_h) \leq
\lim_h  \sum_{i=1}^{H_\eps}\mathcal{J}_K( f_h^{ \eps,i}) +2r(\eps)= \sum_{i=1}^{H_\eps} \mathcal{J}_K( f^{ \eps,i})+ 2 r ( \eps).$$ 
By 
using the continuity of $\mathcal{J}_K$ with respect to $L^1$ convergence, as passing to the limit as $\eps\to 0$, we  obtain
$$\lim_{h }\mathcal{J}_K(f_h) \leq \mathcal J _K ( f) $$
and hence that 
 $f$ is a solution to the isoperimetric problem $\sup_{f\in {\overline {\mathcal{A}}_G}} \mathcal{J}_K(f)$.

Finally, the fact that  $f$ is the characteristic function of a  (bounded) fundamental domain $D$ of $G$ 
is  obtained by the same  proof as in Theorem \ref{thex1}. \end{proof}

\section{Optimal tilings in two dimensions} 
 
 A natural question suggested by our existence results is whether is it possible to determine explicitly the geometry of an optimal tiling, at least in dimension $N = 2$. 
 In case the cost is given by the classical perimeter, and in the more general framework  of cells having merely equal area but not necessarily identical shape, 
a celebrated theorem  by Hales \cite{hales} states that the optimal  configuration is given by the  tessellation with regular hexagons (see also  \cite{morgansolo}).  A similar behaviour has been recently established also in case of optimal partitions for the Cheeger constant \cite{BBFV, BF1} and for 
 Robin eigenvalues \cite{BF2}.   
 
By analogy, the tessellation with regular hexagons is  a natural candidate to be an optimal configuration also in the framework of nonlocal perimeters, at least for periodic foams  with equal cells. 
 Below we give a partial results in this direction, restricting further the class of admissible configurations. 
Specifically, we consider the following simplified variant of problem \eqref{iso2}: \begin{equation}\label{iso2poly} 
 \inf_{G\in\G_m} \inf \Big \{  \Per_K(E) \ :\ E \in \mathcal D _G, \ E \text { convex polygon} \Big \} \,.
 \end{equation}  

Let us mention that the nonlocal isoperimetric problem in the class of polygons  with a fixed number of sides has been recently considered in   \cite{BCT,BBF}. 
In particular, in \cite{BBF} it is shown that such problem is more delicate than expected, because, depending on the choice of the kernel, the phenomenon of symmetry breaking (meant as the non optimality of the regular gon) may occur for every even number of sides  larger than or equal to $6$. 

However, in \eqref{iso2poly} we deal only with convex polygons which tessellate the plane  by translations. Recall that   a convex polygon which tessellates the plane  by translations  can only be either a centrally symmetric hexagon or a parallelogram \cite{mm, fedorov}. 
Hence, admissible polygons in problem \eqref{iso2poly} are reduced to centrally symmetric convex hexagons,  possibly degenerated into a parallelogram,  that is, possibly having  two triples of vertices aligned.  For such polygons, the following simple symmetrization result holds:


\begin{lemma}\label{lemmahex} 
Let  $\mathcal H $ be the class of centrally symmetric convex hexagons of unit area, possibly degenerated into a parallelogram.   Then:

\begin{itemize}
\item[(i)] Every $H \in \mathcal H$ can be transformed  by two Steiner symmetrizations into  the regular hexagon $H^*$ of unit area. 

\item[(ii)]   For any nonnegative symmetric  kernel $K$ on $\R ^2$,   satisfying  \eqref{frac} or \eqref{int},    there holds 
$\min_{H\in \mathcal{H}} \Per_K(H) =  \Per_K(H^*)$. 
Moreover, if $K$ is strictly decreasing, then  $H^*$ is the unique minimizer.
 \end{itemize} \end{lemma}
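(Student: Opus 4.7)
The plan is to first establish the geometric reduction claimed in (i), and then derive the perimeter inequality in (ii) via a classical rearrangement argument.

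For (i), I would parameterise a non-degenerate $H \in \mathcal{H}$ by its three successive edge-vectors $u_1, u_2, u_3$ (placing the center of symmetry at the origin), noting that parallelograms correspond to the degenerate case in which one of the $u_i$ is parallel to the sum of the other two. After a preliminary rotation (which does not affect membership in $\mathcal{H}$), I would apply a first Steiner symmetrization $S_{\ell_1}$ across a well-chosen line $\ell_1$ through the origin: by central symmetry the resulting set inherits $D_2$ symmetry (two orthogonal reflection axes) and two pairs of opposite edges become mirror images of each other, reducing the number of free parameters. A second Steiner symmetrization $S_{\ell_2}$ across a line $\ell_2$ inclined at $60^\circ$ to $\ell_1$ is then chosen so that the new reflection symmetry combines with the central symmetry to generate the full dihedral group $D_6$; a centrally symmetric convex hexagon of unit area invariant under $D_6$ is necessarily $H^*$. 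Degenerate parallelograms are absorbed into this argument as a limiting case.

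For (ii), I would invoke the classical Steiner rearrangement inequality for Riesz-type pairings: whenever $K$ is invariant under the reflection in a line $\ell$, one has
\[
\int_E \int_E K(x-y) \, dx \, dy \;\leq\; \int_{S_\ell E} \int_{S_\ell E} K(x-y) \, dx \, dy,
\]
and consequently, via the identity $\Per_K(E) = |E|\,\|K\|_1 - \mathcal{J}_K(\chi_E)$ in the integrable case \eqref{int} (and an analogous truncation/approximation argument in the singular case \eqref{frac}), $\Per_K$ does not increase under $S_\ell$. Applied successively along the axes $\ell_1$ and $\ell_2$ produced in part (i), this yields
\[
\Per_K(H) \;\geq\; \Per_K(S_{\ell_1} H) \;\geq\; \Per_K(S_{\ell_2} S_{\ell_1} H) \;=\; \Per_K(H^*),
\]
so $H^*$ attains the minimum over $\mathcal{H}$. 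For uniqueness when $K$ is strictly (radially) decreasing, the strict form of the Steiner rearrangement inequality forces $H$ to be already symmetric about both $\ell_1$ and $\ell_2$, and combined with central symmetry this implies $H = H^*$.

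The main obstacle I anticipate is the careful geometric argument in part (i): identifying the correct axes $\ell_1,\ell_2$ and verifying that exactly two Steiner symmetrizations suffice to reach the regular hexagon while preserving the hexagonal combinatorial structure (the symmetrised sets must remain centrally symmetric convex polygons with at most six sides, and the second-step axis must be chosen so that its reflection symmetry is compatible with, rather than destructive of, the symmetries produced by the first step). A secondary technical subtlety is making the Steiner rearrangement inequality rigorous in the fractional setting \eqref{frac}, handled by truncating $K$ near the origin and passing to the limit using monotonicity.
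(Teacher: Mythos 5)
Your part (ii) matches the paper's argument: both reduce the perimeter inequality to the monotonicity of the nonlocal perimeter under Steiner symmetrization (the paper cites Kreuml \cite{k} for the rearrangement inequality and its strict version for strictly decreasing kernels). The genuine gap is in part (i), precisely where you flag ``the main obstacle.'' Your plan is to upgrade from $D_2$ to $D_6$ symmetry via a second Steiner symmetrization at $60^\circ$, but this does not work as stated: Steiner symmetrization across $\ell_2$ does not, in general, preserve a pre-existing reflection symmetry across a non-orthogonal line $\ell_1$, so after $S_{\ell_2}$ you only know the set is symmetric across $\ell_2$ and $\ell_2^\perp$, not across $\ell_1$. (A concrete counterexample: an ellipse with axes along $\ell_1,\ell_1^\perp$, Steiner symmetrized across a line at $60^\circ$, is again an ellipse with axes along $\ell_2,\ell_2^\perp$ and is not symmetric across $\ell_1$ unless it was already a disc.) Without the $\ell_1$-symmetry surviving, you cannot generate $D_6$, and the group-theoretic identification of $H^*$ collapses. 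A secondary unresolved point is that you never verify the symmetrized sets stay centrally symmetric convex hexagons rather than acquiring more edges.

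The paper closes both gaps with a combinatorial, not group-theoretic, argument: it symmetrizes across the perpendicular bisector of a short diagonal $AE$ (cutting off the triangle $AEF$), checks explicitly that the result $AB'C'D'EF'$ is again a hexagon whose vertices are not collinear three-by-three, and records the edge congruences produced by the reflection together with those inherited from central symmetry. A second symmetrization across the perpendicular bisector of $F'D'$ then forces all six edges to be congruent, and together with the residual symmetries this pins down the regular hexagon directly, without ever asserting that the first symmetry axis survives. If you want to salvage your plan, you would need either to show the $\ell_1$-symmetry is preserved in this specific polygonal situation (which is not automatic and you do not argue), or to replace the $D_6$ step by an explicit edge-by-edge congruence count in the spirit of the paper.
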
 
 
\begin{proof}
Let $H \in \mathcal H$ and let us denote by $A,B,C,D,E,F$ its vertices, ordered  in counterclockwise sense. We apply to  $H$ a first  Steiner symmetrization with respect of the axis of a diagonal
connecting the  vertices of two consecutive edges, say $AE$. 
The triangle $AEF$ becomes a isosceles triangle $AEF'$, the edges $AB$ and $DE$ become edges $AB'$, $ED'$ parallel to the symmetry axis, and finally the triangle $BDC$ becomes the isosceles triangle $B'D'C'$. 
In particular,  the pairs  $AB'$ and $ED'$, $AF'$ and $F'E$ and $B'C'$ and $C'D'$ are congruent, and by symmetry with respect to the origin, also the pairs $AF'$ and $C'D"$, 
$C'B'$ and $EF'$ are congruent. The polygon $H^*$ we obtained is again a hexagon, whose vertices are not aligned three by three 
(see Figure \ref{figura}). 
Now  we repeat the same argument  with respect to the axis of  the diagonal $F'D'$, thus obtaining a hexagon with equal sides, which is symmetric with respect to the origin and also with respect to the diagonals joining two opposite vertices, and is not degenerated into a square.  We conclude that that  the new hexagon is the regular one.  
Part (ii)  of the statement follows from part (i) just proved,  and from the known fact that 
the nonlocal perimeter decreases under Steiner symmetrization:
if $E\subseteq\R^2$  is a measurable set, and $E^*$ its Steiner symmetrization with respect to a given  axis, then 
$\Per_K(E)\geq \Per_K(E^*)$. Moreover, if $K$ is strictly decreasing, equality holds if and only if $E$ is a translate of $E^*$. We refer to \cite[Corollary 3.2, Theorem 3.3 and Corollary 3.4]{k}.  
  \end{proof}

 \bigskip 
 
 \begin{figure} [h] 
\centering   
\def\svgwidth{7cm}   
\begingroup%
  \makeatletter%
  \providecommand\color[2][]{%
    \errmessage{(Inkscape) Color is used for the text in Inkscape, but the package 'color.sty' is not loaded}%
    \renewcommand\color[2][]{}%
  }%
  \providecommand\transparent[1]{%
    \errmessage{(Inkscape) Transparency is used (non-zero) for the text in Inkscape, but the package 'transparent.sty' is not loaded}%
    \renewcommand\transparent[1]{}%
  }%
  \providecommand\rotatebox[2]{#2}%
  \ifx\svgwidth\undefined%
    \setlength{\unitlength}{265.53bp}%
    \ifx\svgscale\undefined%
      \relax%
    \else%
      \setlength{\unitlength}{\unitlength * \real{\svgscale}}%
    \fi%
  \else%
    \setlength{\unitlength}{\svgwidth}%
  \fi%
  \global\let\svgwidth\undefined%
  \global\let\svgscale\undefined%
  \makeatother%
  \begin{picture}(3.5,0.5)%
    \put(0.02,0){\includegraphics[height=4cm]{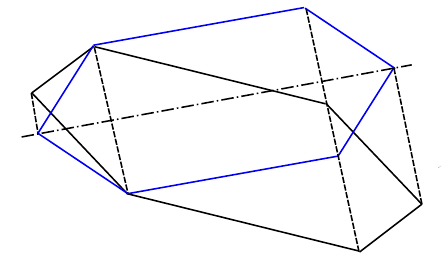} 
    \hskip 1.5 cm \includegraphics[height=4cm]{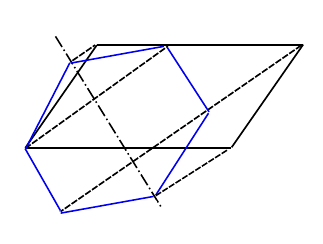}
    } %
    \put(0.256, 0.1){\color[rgb]{0,0,0}\makebox(0,0)[lb]{\smash{$A$}}}
    \put(0.8, -0.01){\color[rgb]{0,0,0}\makebox(0,0)[lb]{\smash{$B$}}}
        \put(0.93, 0.09){\color[rgb]{0,0,0}\makebox(0,0)[lb]{\smash{$C$}}}
    \put(0.74, 0.34){\color[rgb]{0,0,0}\makebox(0,0)[lb]{\smash{$D$}}}
        \put(0.17, 0.47){\color[rgb]{0,0,0}\makebox(0,0)[lb]{\smash{$E$}}}
    \put(0.04, 0.36){\color[rgb]{0,0,0}\makebox(0,0)[lb]{\smash{$F$}}}
    \put(0.76, 0.2){\color[rgb]{0,0,0}\makebox(0,0)[lb]{\smash{$B'$}}}
        \put(0.89, 0.38){\color[rgb]{0,0,0}\makebox(0,0)[lb]{\smash{$C'$}}}
    \put(0.7, 0.55){\color[rgb]{0,0,0}\makebox(0,0)[lb]{\smash{$D'$}}}
        \put(0.035, 0.28){\color[rgb]{0,0,0}\makebox(0,0)[lb]{\smash{$F'$}}}
    \put(1.23, 0.175){\color[rgb]{0,0,0}\makebox(0,0)[lb]{\smash{$A$}}}
    \put(1.56, 0.175){\color[rgb]{0,0,0}\makebox(0,0)[lb]{\smash{$B$}}}
    \put(1.76, 0.175){\color[rgb]{0,0,0}\makebox(0,0)[lb]{\smash{$C$}}}
    \put(1.93, 0.48){\color[rgb]{0,0,0}\makebox(0,0)[lb]{\smash{$D$}}}
    \put(1.62, 0.48){\color[rgb]{0,0,0}\makebox(0,0)[lb]{\smash{$E$}}}
    \put(1.44, 0.48){\color[rgb]{0,0,0}\makebox(0,0)[lb]{\smash{$F$}}}
    \put(1.29, 0.04){\color[rgb]{0,0,0}\makebox(0,0)[lb]{\smash{$B'$}}}
    \put(1.61, 0.08){\color[rgb]{0,0,0}\makebox(0,0)[lb]{\smash{$C'$}}}
    \put(1.72, 0.28){\color[rgb]{0,0,0}\makebox(0,0)[lb]{\smash{$D'$}}}
   \put(1.37, 0.465){\color[rgb]{0,0,0}\makebox(0,0)[lb]{\smash{$F'$}}}
 \end{picture}%
\endgroup%
\vskip .5 cm
\caption{The first symmetrization in the proof of of Lemma \ref{lemmahex}, in case of a hexagon (left) and of a parallelogram (right).}
\label{figura}   
\end{figure}

As an immediate consequence of Lemma \ref{lemmahex}, we obtain: 

 \begin{theorem}\label{coresa}  
 For any nonnegative symmetric  kernel $K$ on $\R ^2$,   satisfying  \eqref{frac} or \eqref{int},   
 the regular hexagon minimizes the nonlocal perimeter $\Per_K$ among convex polygons of the same area which tessellate $\R^2$ by translations.
 \end{theorem}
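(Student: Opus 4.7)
The plan is to reduce the statement to Lemma \ref{lemmahex} by way of the Fedorov--Minkowski classification of convex polygons which tessellate $\R^2$ by translations.

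First I would observe that, by \cite{mm, fedorov}, the only such polygons are parallelograms and centrally symmetric hexagons. Since a parallelogram can be viewed as a degenerate centrally symmetric hexagon (with two triples of collinear vertices), the admissible competitors for problem \eqref{iso2poly} of area $m$ form, up to translation, precisely the class $m^{1/2}\mathcal H$, where $\mathcal H$ is the unit-area class of Lemma \ref{lemmahex}. Conversely, every element of $m^{1/2}\mathcal H$ is indeed a fundamental domain for some lattice in $\G_m$, so this class is exactly the feasible set.

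Next I would rescale to unit area so that Lemma \ref{lemmahex} applies directly. For any $E\in m^{1/2}\mathcal H$, the substitution $x=m^{1/2}x'$, $y=m^{1/2}y'$ yields
\[
\Per_K(E)\ =\ m^2\,\Per_{K_m}(m^{-1/2}E), \qquad K_m(z):=K(m^{1/2}z),
\]
with $m^{-1/2}E\in\mathcal H$. The rescaled kernel $K_m$ is nonnegative and symmetric, and each of the hypotheses \eqref{frac} and \eqref{int} is stable under the dilation $z\mapsto m^{1/2}z$: the fractional lower bound $|h|^{-N-s}$ transforms into a positive multiple of itself, $L^1$-integrability is preserved, and the strict-drop condition $\liminf_{z\to 0^+}[K(z)-K(z+x)]>0$ (for $x\ne 0$) transfers verbatim via the change of variable. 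Hence Lemma \ref{lemmahex}(ii) applied to $K_m$ gives
\[
\Per_{K_m}(m^{-1/2}E)\ \ge\ \Per_{K_m}(H^*),
\]
where $H^*\in\mathcal H$ is the regular unit-area hexagon. Rescaling back, $m^{1/2}H^*$ is the regular hexagon of area $m$, and the inequality $\Per_K(E)\ge \Per_K(m^{1/2}H^*)$ follows. Since $m^{1/2}H^*$ is itself admissible, it realizes the minimum.

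All the geometry has already been absorbed into Lemma \ref{lemmahex}; the only step to perform is the rescaling, which is routine. There is no substantive obstacle beyond checking scale-invariance of the structural assumptions on $K$, as indicated above.
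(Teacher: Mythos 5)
Your proposal is correct and follows essentially the same route as the paper, which presents the theorem as an immediate consequence of Lemma \ref{lemmahex} after invoking the Fedorov--McMullen classification of convex translational tiles. The only thing you add is the explicit rescaling to unit area and the (correct) verification that assumptions \eqref{frac} and \eqref{int} are dilation-stable; the paper glosses over this, implicitly because the Steiner symmetrization argument of Lemma \ref{lemmahex}(i) preserves area and hence works at any fixed area, not just area one.
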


It is an open and interesting question  whether  the above result still holds removing the restriction to convex polygons.

 \bigskip\bigskip 
 
\noindent{\it Acknowledgements.} The authors are members of INDAM-GNAMPA; the third author was supported by the PRIN Project 2019/24. 

\begin{bibdiv}
\begin{biblist}
  	
\bib{BBFV}{article}{
    AUTHOR = {Bucur, D.},
    AUTHOR = {Fragal\`a, I.}, 
    AUTHOR = {Velichkov, B.},
    AUTHOR = {Verzini, G.},
     TITLE = {On the honeycomb conjecture for a class of minimal convex partitions 
},
   JOURNAL = {Trans. Amer. Math. Soc.}
   VOLUME = {370},
      YEAR = {2018},
    NUMBER = {10},
     PAGES = {7149--7179},
   
   }

\bib{BF1}{article}{
    AUTHOR = {Bucur, D.},
    AUTHOR = {Fragal\`a, I.}, 
     TITLE = {Proof of the honeycomb asymptotics for optimal Cheeger clusters
},
   JOURNAL = {Adv. Math.}
   VOLUME = {350},
      YEAR = {2019},
     PAGES = {97--129},
   
   }

\bib{BF2}{article}{
    AUTHOR = {Bucur, D.},
    AUTHOR = {Fragal\`a, I.}, 
     TITLE = {On the honeycomb conjecture for Robin Laplacian eigenvalues
},
   JOURNAL = {Commun. Contemp. Math.}
   VOLUME = {21},
      YEAR = {2019},
    NUMBER = {2},
     PAGES = {29 pp.},
   
   }

\bib{BBF}{article}{
    AUTHOR = {Bogosel, B.},
    AUTHOR = {Bucur, D.},
    AUTHOR = {Fragal\`a, I.},
     TITLE = {The nonlocal isoperimetric problem for polygons:  
Hardy-Littlewood and Riesz inequalities 
},
   JOURNAL = {Math. Ann.}
      YEAR = {2023},

   }

\bib{BCT}{article}{
    AUTHOR = {Bonacini, M.},
AUTHOR = {Cristoferi, R.},
AUTHOR = {Topaloglu, I.},
     TITLE = {Riesz-type inequalities and overdetermined problems for
              triangles and quadrilaterals},
   JOURNAL = {J. Geom. Anal.},
    VOLUME = {32},
      YEAR = {2022},
    NUMBER = {2},
     PAGES = {Paper No. 48},
     }
	
\bib{cassels}{book}{ 
    AUTHOR = {Cassels, J. W. S.},
     TITLE = {An introduction to the geometry of numbers},
    SERIES = {Classics in Mathematics},
      NOTE = {Corrected reprint of the 1971 edition},
 PUBLISHER = {Springer-Verlag, Berlin},
      YEAR = {1997},
     PAGES = {viii+344},
      ISBN = {3-540-61788-4},}
\bib{cn}{article}{
 AUTHOR = {Cesaroni, A.},
    author={Novaga, M.},
    TITLE = {The isoperimetric problem for nonlocal perimeters},
   JOURNAL = {Discrete Contin. Dyn. Syst. Ser. S},
    VOLUME = {11},
      YEAR = {2018},
    NUMBER = {3},
     PAGES = {425--440},	}
     
\bib{cnparti}{article}{
    AUTHOR = {Cesaroni, A.},
    author={Novaga, M.},
     TITLE = {Periodic partitions with minimal perimeter},
   JOURNAL = {ArXiv Preprint 2212.11545},
      YEAR = {2022},
   }
 	
\bib{cntiling}{article}{
    AUTHOR = {Cesaroni, A.},
    author={Novaga, M.},
     TITLE = {Minimal periodic foams with equal cells},
   JOURNAL = {ArXiv Preprint  2302.07112},
      YEAR = {2023},
   }
\bib{cdnp}{article}{
    AUTHOR = {Cicalese, M.},
    author={De Luca, L.},
    author={Novaga, M.},
    author={Ponsiglione,
              M.},
     TITLE = {Ground states of a two phase model with cross and self
              attractive interactions},
   JOURNAL = {SIAM J. Math. Anal.},
    VOLUME = {48},
      YEAR = {2016},
    NUMBER = {5},
     PAGES = {3412--3443},}

\bib{choe}{article}{
   AUTHOR = {Choe, J.},
     TITLE = {On the existence and regularity of fundamental domains with
              least boundary area},
   JOURNAL = {J. Differential Geom.},
    VOLUME = {29},
      YEAR = {1989},
    NUMBER = {3},
     PAGES = {623--663},
}

%
 \bib{colombomaggi} {article}{
    AUTHOR = {Colombo, M.},
    author={Maggi, F.},
     TITLE = {Existence and almost everywhere regularity of isoperimetric
              clusters for fractional perimeters},
   JOURNAL = {Nonlinear Anal.},
       VOLUME = {153},
      YEAR = {2017},
     PAGES = {243--274},
}
 
\bib{CS}{book}{
author={J.H. Conway},
author={N.J.A. Sloane}, 
 TITLE = {Sphere packings, lattices and groups. Third edition},
    SERIES = {Grundlehren der mathematischen Wissenschaften},
    VOLUME = {290},
 PUBLISHER = {Springer-Verlag, New York},
      YEAR = {1999},
}
 
\bib{fedorov}{article}{ 
AUTHOR = {Fedorov, E.S.},
TITLE = {The Symmetry of Regular Systems of Figures}, 
JOURNAL = {Proceedings of the Imperial St. Petersburg Mineralogical Society}, 
VOLUME = {28}, 
YEAR = {1891},
PAGES = {1--146},
} 
\bib{Gal14}{article}{ 
AUTHOR = {Gallagher, Paul},
 author={Ghang, Whan},  
 author={Hu, David},
 author={Martin, Zane}, 
 author={Miller, Maggie},
 author={Perpetua, Byron}, 
 author={Waruhiu, Steven},
 title={Surface-area-minimizing n-hedral Tiles},
journal={Rose-Hulman Undergraduate Mathematics Journal}, 
volume={15},
year={2014},
NUMBER = {1},
PAGES = {Article 13},
}

\bib{hales}{article}{
author={ Hales, T.C.},
title={The honeycomb conjecture},
journal={Discrete Comput. Geom.},
volume={ 25},
 YEAR = {2001},
    NUMBER = {1},
     PAGES = {1--22},
}

\bib{JW}{article}{
author={ Jarohs, S.},
author= { Weth, T.}
title={Local compactness and nonvanishing for weakly singular
              nonlocal quadratic forms},
journal={Nonlinear Anal.},
volume={193},
 YEAR = {2020},
     PAGES = {111431, 15},
}

\bib{k}{article}{ 
    AUTHOR = {Kreuml, A.},
     TITLE = {The anisotropic fractional isoperimetric problem with respect
              to unconditional unit balls},
   JOURNAL = {Commun. Pure Appl. Anal.},
    VOLUME = {20},
      YEAR = {2021},
    NUMBER = {2},
     PAGES = {783--799},
     }

\bib{maggi}{book}{
    AUTHOR = {Maggi, F.},
     TITLE = {Sets of finite perimeter and geometric variational problems},
    SERIES = {Cambridge Studies in Advanced Mathematics},
    VOLUME = {135},
 PUBLISHER = {Cambridge University Press, Cambridge},
      YEAR = {2012},
}

\bib{ma}{article}{
AUTHOR = {Mahler, K.},
     TITLE = {On lattice points in {$n$}-dimensional star bodies. {I}.
              {E}xistence theorems},
   JOURNAL = {Proc. Roy. Soc. London Ser. A},
     VOLUME = {187},
      YEAR = {1946},
     PAGES = {151--187},
      }		
      
\bib{mnpr}{article}{
    AUTHOR = {Martelli, B.}, 
    AUTHOR = {Novaga, M.}, 
    AUTHOR = {Pluda, A.},
    AUTHOR = {Riolo, S.},
     TITLE = {Spines of minimal length},
   JOURNAL = {Ann. Sc. Norm. Super. Pisa Cl. Sci. (5)},
    VOLUME = {17},
      YEAR = {2017},
    NUMBER = {3},
     PAGES = {1067--1090},
     }
\bib{mm}{article}{
    AUTHOR = {McMullen, P.},
     TITLE = {Convex bodies which tile space by translation},
   JOURNAL = {Mathematika},
    VOLUME = {27},
      YEAR = {1980},
    NUMBER = {1},
     PAGES = {113--121},
}
	

\bib{morgansolo}{article}{
    AUTHOR = {Morgan, F.},
     TITLE = {The hexagonal honeycomb conjecture},
   JOURNAL = {Trans. Amer. Math. Soc. },
    VOLUME = {351},
      YEAR = {1999},
    NUMBER = {5},
     PAGES = {1753--1763},
}
		
\bib{npst}{article}{
    AUTHOR = {Novaga, M.}, 
    author={Paolini, E.}, 
    author={Stepanov, E.}, 
    author={ Tortorelli, V.M.},
     TITLE = {Isoperimetric clusters in homogeneous spaces via concentration compactness},
   JOURNAL = { J. Geom. Anal. },
    VOLUME = {32},
      YEAR = {2022},
      NUMBER = {11},
      PAGES = {Paper No. 263}, 
}

%

\bib{thompson}{article}{ 
AUTHOR = {Thomson (Lord Kelvin), W.},
     TITLE = {On the division of space with minimum partitional area},
   JOURNAL = {Acta Math.},
    VOLUME = {11},
      YEAR = {1887},
    NUMBER = {1-4},
     PAGES = {121--134},
}
%
\bib{kelvin}{book}{
EDITOR = {Weaire, D.},
     TITLE = {The {K}elvin problem},
         NOTE = {Foam structures of minimal surface area},
 PUBLISHER = {Taylor \& Francis, London},
      YEAR = {1996},
      }
 
%
\end{biblist}\end{bibdiv}

 \end{document}